\renewcommand{\Bbb}[1]{\mathbb{#1}}
\newif\ifpdf
\def\fext{pdf}
\def\fext{eps}
\providecommand{\1}{\mathbf{1}}
\newcommand{\ignore}[1]{}
\newcommand{\mfalls}{\quad\mbox{if \;}}
\newcommand{\msonst}{\quad\mbox{else}}
\renewcommand{\cases}[1]{\left\{\begin{array}{rl}#1\end{array}\right.}
\newcommand{\mbs}[1]{\mbox{ \;#1\; }}
\newcommand{\mf}{\quad\mbox{\;for \;}}
\newcommand{\mfa}{\quad\mbox{\;for all \;}}
\newcommand{\mfs}{\quad\mbox{a.s.}}
\renewcommand{\P}{\mathbf{P}}
\newcommand{\E}{\mathbf{E}}
\def\given{\hspace{0.8pt}|\hspace{0.8pt}}
\def\Given{\hspace{1.0pt}\big|\hspace{1.0pt}}
\def\ggiven{\hspace{1.2pt}\Big|\hspace{1.2pt}}
\newcommand{\limn}{\stackrel{n \rightarrow \infty}{\longrightarrow}}
\newcommand{\N}{{\mathbb{N}}}
\newcommand{\Ray}{{\mathsf{Ray}}}
\newcommand{\CT}{{\mathcal{T}}}
\newcommand{\CN}{{\mathcal{N}}}
\newcommand{\EE}{\mathbb{E}}
\newcommand{\PP}{\mathbb{P}}
\newcommand{\Z}{{\mathbb{Z}}}
\newcommand{\bec}{\begin{equation}}
\newcommand{\eec}{\end{equation}}
\newcommand{\bac}{\begin{eqnarray}}
\newcommand{\eac}{\end{eqnarray}}
\newcommand{\be}{\begin{displaymath}}
\newcommand{\ee}{\end{displaymath}}
\newcommand{\ba}{\begin{eqnarray*}}
\newcommand{\ea}{\end{eqnarray*}}
\newcommand{\equ}[1]{(\ref{#1})}
\newcommand{\DS}{\displaystyle}
\newtheorem{proposition}{Proposition}[section]
\newtheorem{theorem}[proposition]{Theorem}
\newtheorem{lemma}[proposition]{Lemma}
\newtheorem{uremark}[proposition]{Remark}
\newtheorem{uexample}[proposition]{Example}
\newcommand{\Section}[1]{\section{#1}\setcounter{figure}{0}\setcounter{table}{0}\setcounter{equation}{0}}
\def\qed{\mbox{$\Box$}}
\newenvironment{proof}{\par\noindent{\bf Proof.\ }}{\hfill\qed\\ }
\newcommand{\weight}{\mathop{\mbox{\sf weight}}}
\newtheorem{udefi}{Definition}[section]
\newtheorem{utheo}{Theorem}
\newtheorem{usatz}[udefi]{Satz}
\newtheorem{uprop}[udefi]{Proposition}
\newtheorem{ubemerkung}[udefi]{Bemerkung}
\newtheorem{ukorollar}[udefi]{Korollar}
\def\CL{\mathcal{L}}
\newcommand{\restbig}[1]{\raisebox{-0.45em}{$\Big|_{\scriptstyle
#1}$}}
\definecolor{Red}{rgb}{1,0,0}
\begin{document}
\title{Biased Random Walk on Spanning Trees of the Ladder Graph}
\author{
Nina Gantert\\
Fakult\"{a}t f\"{u}r Mathematik\\
Technische Universit\"{a}t M\"{u}nchen\\
Boltzmannstr. 3\\
85748 Garching\\
Germany\\
gantert@ma.tum.de
\and Achim Klenke\\
Institut f\"{u}r Mathematik\\
Johannes Gutenberg-Universit\"{a}t Mainz\\
Staudingerweg 9\\
55099 Mainz\\Germany\\
math@aklenke.de}

\date{\small Version from 22.02.2023}

\maketitle
\begin{abstract}
We consider a specific random graph which serves as a disordered medium for a particle performing biased random walk. Take a two-sided infinite horizontal ladder and pick a random spanning tree with a certain edge weight $c$ for the (vertical) rungs. Now take a random walk on that spanning tree with a bias $\beta>1$ to the right.
In contrast to other random graphs considered in the literature (random percolation clusters, Galton-Watson trees) this one allows for an explicit analysis based on a decomposition of the graph into independent pieces.

We give an explicit formula for the speed of the biased random walk as a function of both the bias $\beta$ and the edge weight $c$. We conclude that the speed is a continuous, unimodal function of $\beta$ that is positive if and only if $\beta <  \beta_c^{(1)}$ for an explicit critical value $\beta_c^{(1)}$ depending on $c$. In particular, the phase transition at $\beta_c^{(1)}$ is of second order.

We show that another second order phase transition takes place at another critical value $\beta_c^{(2)}<\beta_c^{(1)}$ that is also explicitly known: For $\beta<\beta_c^{(2)}$ the times the walker spends in traps have second moments and (after subtracting the linear speed) the position fulfills a central limit theorem. We see that
$\beta_c^{(2)}$ is smaller than the value of $\beta$ which achieves the maximal value of the speed. Finally, concerning linear response, we confirm the Einstein relation for the unbiased model ($\beta=1$) by proving a central limit theorem and computing the variance.
\end{abstract}
\Section{Introduction and Main Results}
\label{S1}
\subsection{Introduction}
\label{S1.1}
This paper studies a very specific model for transport in a disordered medium.
Biased random walks in random environments and on random graphs have been investigated intensively over the last years. The most prominent examples are biased random walk on supercritical percolation clusters, introduced in \cite{BarmaDhar}
 and biased random walk on supercritical Galton-Watson tree, introduced in \cite{LPP}. We refer to \cite{GBAFri} for a survey. Another specific model which has found a lot of recent interest in the physics literature is the random comb graph, see \cite{KotakBarma}, \cite{WhiteBarma}, \cite{BalakBroeck}, \cite{DemaerelMaes}.
In the presence of traps in the medium, there are often three regimes of transport, see for instance \cite{KotakBarma} and the references therein.
\begin{itemize}
\item[1.] The Normal Transport regime for small values of the bias: the walk has a positive linear speed and, when subtracting the linear speed, it is diffusive.
\item[2.] The Anomalous Fluctuation regime for intermediate values of the bias: the walk still has a positive linear speed but the diffusivity is lost.
\item[3.] The Vanishing Velocity regime (aka subballistic regime):
the speed of the random walk is zero if the bias is larger than some critical value, due to the time the random walk spends in traps.
\end{itemize}
The Normal Transport regime together with the Anomalous Fluctuation regime are also known as the ballistic regime.
For biased random walk on supercritical Galton-Watson trees, these statements have been proved in \cite{LPP}. For biased random walks on supercritical percolation clusters, the existence of the critical value separating the ballistic regime from the  Vanishing Velocity regime
was shown in \cite{FriHam}, whereas the earlier works \cite{Sznitperc} and \cite{BGP} gave the existence of a zero speed and a positive speed regime. In the ballistic regime, one may ask about the behaviour of the linear speed as a function of the bias. Is the speed increasing as a function of the bias?
This question is also interesting in disordered media without ``hard traps'', for instance Galton-Watson trees without leaves or the random conductance model (with conductances that are bounded above and bounded away from $0$). In that case, there is no Vanishing Velocity regime.
Monotonicity of the speed for biased random walks on supercritical Galton-Watson trees without leaves is a famous open question, see
\cite{LyonsPeresPemantle1997}.
We refer to \cite{Aidekon2014} and \cite{BenArousFriberghSidorvacius2014} for recent results on Galton-Watson trees and \cite{BergerGantertNagel2019} for a counterexample to monotonicity in the random conductance model. The Normal Transport regime for biased random walks on supercritical percolation clusters has been established in \cite{FriHam}, \cite{Sznitperc},  \cite{BGP}.
Limit laws for the position of the walker have been investigated both in the Anomalous Fluctuation regime and in the Vanishing Velocity regime in several examples, see \cite{Hammond}, \cite{BFGH}, \cite{GMM2}, \cite{MeiLue}, \cite{BowditchCroydon2022}.
For biased random walk on a supercritical percolation cluster, the conjectured picture of the speed as a function of the bias is as in Figure \ref{F1.3}.
However, there is no rigorous proof that the speed is a unimodal function of the bias.
 Here, we consider a random graph given as a (uniformly chosen) spanning tree of the ladder graph, parametrized by the density of vertical edges. In this case, we can give an explicit formula for the speed of the biased random walk, see \eqref{E1.12}.
 In particular, we have an explicit critical value $\beta_c^{(1)}$ for the bias such that
 the speed is positive for $\beta\in\big(1,\beta_c^{(1)}\big)$ and is zero for $\beta\geq\beta_c^{(1)}$.
From this formula, we see that the speed is a unimodal function of the bias, see Figure \ref{F1.3}.
The formula also allows to study the dependence of the speed on the density of vertical edges.
We show that for an explicit value $\beta_c^{(2)}< \beta_c^{(1)}$, a central limit theorem holds for $\beta < \beta_c^{(2)}$. This establishes the Normal Transport regime for our model and is not surprising as the same statement is true for other biased walks on random graphs, see \cite{Sznitperc}, \cite{BGP}, \cite{FriHam}, \cite{GMM2}. In contrast to these examples, the critical value  $\beta_c^{(2)}$ is explicit in our case.
For the unbiased case, we even have a quenched invariance principle. By computing the variance, we confirm the Einstein relation for our model.
It has been said (but we do not have a written reference for this conjecture)  in the general setup that the critical
$\beta_c^{(2)}$ for the existence of second moments and for the validity of a central limit theorem is the value of $\beta$ where the speed is maximal. However, this is not true in our example. We show that $\beta_c^{(2)}$ is strictly smaller than the value where the speed is maximized.\\
Our proofs rely on a decomposition of the uniform spanning tree due to \cite{Klenke2017}, on explicit calculations for hitting times using conductances, on regeneration times and some ergodic theory arguments.
The decomposition of the spanning tree allows for an interpretation as a trapping model in the spirit of \cite{GBAFri, Bowditch2019, BetzMeinersTomic2023}.

\subsection{Definition of the model}
\label{S1.2}
To define our model of biased random walk on a random spanning tree, we need to introduce two things: (1) the random spanning tree and (2) the random walk on it. We begin with the random spanning tree.

\textbf{Random spanning tree}\par
Consider the two-sided infinite ladder graph $L=\big(V^L,E^L\big)$ with vertex set $V^L=\{0,1\}\times\Z$ and edge set
$$E^L=\big\{z_m,\,h_{0,m},\,h_{1,m}:\;m\in\Z\big\}.$$
Here the
$$h_{i,k}=\{(i,k),(i,k+1)\},\qquad i\in\{0,1\},\;k\in\Z,$$
are the horizontal edges and the
$$z_k:=\{(0,k),(1,k)\},\qquad k\in\Z,$$
are the vertical edges.
See Figure~\ref{F1.1}.
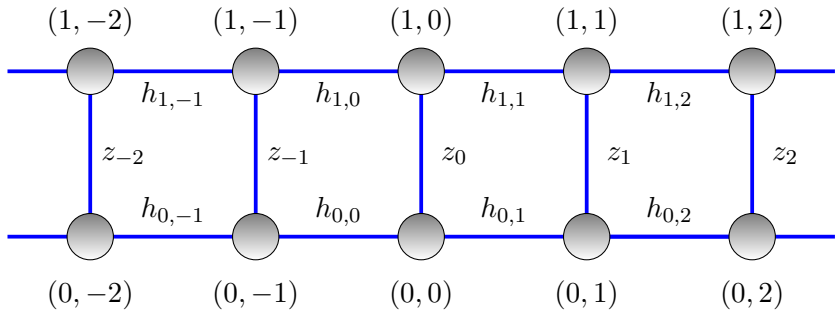
\begin{figure}[ht]
\centerline{
\begin{tikzpicture}[scale=2.2]
\def\myrad{0.14cm}
\draw[color=blue, line width = 0.5mm] (-0.5,0) -- (4.5,0);
\draw[color=blue, line width = 0.5mm] (-0.5,1) -- (4.5,1);
\draw[color=blue, line width = 0.5mm] (3,0) -- (4,0);
\foreach \x in {0,...,4}
  \draw[color=blue, line width = 0.5mm] ({\x},0) -- ({\x},1);
\foreach \x in {0,...,4}
  \foreach \y in {0,1}
  \shadedraw ({\x},{\y}) circle(\myrad);
\draw (0.5,+0.3) node[anchor=north]{$h_{0,-1}$};
\draw (0.5, 1.0) node[anchor=north]{$h_{1,-1}$};
\draw (1.5,+0.3) node[anchor=north]{$h_{0,0}$};
\draw (1.5, 1.0) node[anchor=north]{$h_{1,0}$};
\draw (2.5,+0.3) node[anchor=north]{$h_{0,1}$};
\draw (2.5,+1.0) node[anchor=north]{$h_{1,1}$};
\draw (3.5,+0.3) node[anchor=north]{$h_{0,2}$};
\draw (3.5, 1.0) node[anchor=north]{$h_{1,2}$};
\draw (0.2, 0.6) node[anchor=north]{$z_{-2}$};
\draw (1.2, 0.6) node[anchor=north]{$z_{-1}$};
\draw (2.2, 0.6) node[anchor=north]{$z_{0}$};
\draw (3.2, 0.6) node[anchor=north]{$z_{1}$};
\draw (4.2, 0.6) node[anchor=north]{$z_{2}$};
\draw (0.0, -0.2) node[anchor=north]{$(0,-2)$};
\draw (1.0, -0.2) node[anchor=north]{$(0,-1)$};
\draw (2.0, -0.2) node[anchor=north]{$(0,0)$};
\draw (3.0, -0.2) node[anchor=north]{$(0,1)$};
\draw (4.0, -0.2) node[anchor=north]{$(0,2)$};
\draw (0.0, 1.15) node[anchor=south]{$(1,-2)$};
\draw (1.0, 1.15) node[anchor=south]{$(1,-1)$};
\draw (2.0, 1.15) node[anchor=south]{$(1,0)$};
\draw (3.0, 1.15) node[anchor=south]{$(1,1)$};
\draw (4.0, 1.15) node[anchor=south]{$(1,2)$};
\end{tikzpicture}
}
\caption[]{A finite section of the simple ladder graph}
\label{F1.1}
\end{figure}

For $n\in \N$, let
$$V^L_n:=\{0,1\}\times\{-n,\ldots,n\}\subset V^L$$
and let $E^L_n$ denote the induced set of edges. Finally, let $L_n:=(V^L_n,E^L_n)$ denote the induced finite subgraph of $L$.

Let $\mathsf{ST}(L)$ denote the set of all spanning trees of $L$. That is, each $t\in\mathsf{ST}(L)$ is a subset of $E^L$ such that the graph $(V^L,t)$ is connected but has no cycles. Analogously, define $\mathsf{ST}(L_n)$.

Let $c>0$ be a parameter of the model. We attach a weight $\weight(z_m)=c$ to each vertical edge $z_m$ and $\weight(h_{i,m})=1$ to each horizontal edge $h_{i,m}$.

Denote by $\PP^c_n$ the weighted spanning tree distribution on $\mathsf{ST}\big(L_n\big)$, that is
\begin{equation}
\label{E1.01}
\PP^c_n[\{t\}]=\frac{\weight(t)}{\weight\big(\mathsf{ST}\big(L_n\big)\big)}\mf t\in\mathsf{ST}\big(L_n\big).
\end{equation}
By taking the limit $n\to\infty$, we get (in the sense of convergence of finite dimensional distributions)
$$\PP:=\PP^c:=\lim_{n\to\infty}\PP^c_n.$$
By a standard recurrence argument, $\PP$ is concentrated on connected graphs. That is, $\PP[\mathsf{ST}(L)]=1$.

Let $\EE$ denote the expectation with respect to $\PP$ and let $\CT$ be the generic random spanning tree with distribution $\PP$.

Although this is a rigorous and precise description of the model, it is not very helpful when it comes to explicit computations. In fact, for this purpose, it is more convenient to describe the random spanning tree in terms of the positions of its vertical edges (rungs) and its missing horizontal edges. Before we introduce the somewhat technical notation, let us explain the concept.

\begin{figure}[ht]
\centerline{
\begin{tikzpicture}[scale=0.87]
\def\myrad{0.14cm}
\foreach \x in {0,2,3,4,5,6,7,8,9,11,12,13,15}
  \draw[color=blue, line width = 0.5mm] ({\x},1) -- ({\x+1},1);
\foreach \x in {0,1,2,3,5,6,7,8,9,10,11,12,13,14,15}
  \draw[color=blue, line width = 0.5mm] ({\x},0) -- ({\x+1},0);
\foreach \x in {3,8,14}
  \draw[color=blue, line width = 0.5mm] ({\x},0) -- ({\x},1);
\foreach \x in {0,...,16}
  \foreach \y in {0,1}
  \shadedraw ({\x},{\y}) circle(\myrad);
\draw (2.0, -0.2) node[anchor=north]{$H_{-1}$};
\draw (3.0, -0.2) node[anchor=north]{$V_{-1}$};
\draw (5.0, -0.2) node[anchor=north]{$H_0$};
\draw (7.0, -0.2) node[anchor=north]{$0$};
\draw (8.0, -0.2) node[anchor=north]{$V_0$};
\draw (11.0, -0.2) node[anchor=north]{$H_1$};
\draw (14.0, -0.2) node[anchor=north]{$V_1$};
\draw (15.0, -0.2) node[anchor=north]{$H_2$};
\draw[->,>=stealth',thick] (2,1.14) arc[radius=0.53, start angle=160, end angle=20];
\draw[->,>=stealth',thick] (3.08,1.14) arc[radius=1.46, start angle=130, end angle=50];
\draw[->,>=stealth',thick] (5.08,1.14) arc[radius=3.0, start angle=118, end angle=62];
\draw[->,>=stealth',thick] (8.08,1.14) arc[radius=3.0, start angle=118, end angle=62];
\draw[->,>=stealth',thick] (11.08,1.14) arc[radius=3.0, start angle=118, end angle=62];
\draw[->,>=stealth',thick] (14,1.14) arc[radius=0.53, start angle=160, end angle=20];
\draw (2.5, 1.5) node[anchor=south]{$F'_{-1}$};
\draw (4.0, 1.5) node[anchor=south]{$F_{-1}+1$};
\draw (6.5, 1.5) node[anchor=south]{$F'_{0} $};
\draw (9.5, 1.5) node[anchor=south]{$F_{0}+1$};
\draw (12.5, 1.5) node[anchor=south]{$F'_{1} $};
\draw (14.5, 1.5) node[anchor=south]{$F_{1}+1 $};
\end{tikzpicture}
}
\caption[b]{Construction of the random spanning tree $\CT$. Illustration of $A_{3,2,-1,1}$ (explained later).}
\label{F1.2}
\end{figure}
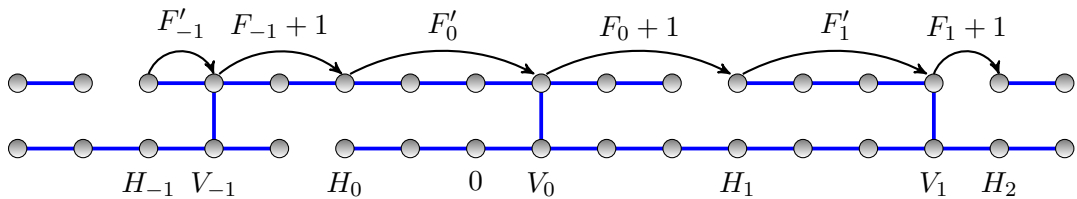

The tree is completely specified if we know the positions of the missing horizontal edges and the positions of the vertical edges (rungs) in the tree.
\begin{itemize}
\item Let $(H_n)_{n\in\Z}$ denote the horizontal positions of the right vertices of the missing rungs. Assume that the numeration is chosen such that
$$\ldots H_{-2}<H_{-1}<H_0\leq 0<H_1<H_2<\ldots$$
\item
Denote by $(W_n)$ the corresponding vertical positions of the missing edges.
\item Between any two horizontal positions $H_n$ and $H_{n+1}-1$ there is exactly one vertical edge in the tree. Denote the horizontal position of this edge by $V_n$. That is, $H_n\leq V_n<H_{n+1}$.
Note that $V_{-1}<0$ and $V_1>0$ but $V_0$ could have either sign or equal 0.
\end{itemize}

Roughly speaking, if we start from a rung at position $V_n$ there are a random number $F_n$ of positions to the right with both horizontal edges before the next horizontal edge is missing. That is, $V_n + F_n +1 = H_{n+1}$. Going right from $H_{n}$ there are a random number $F'_{n}$ of positions before the next rung at $V_{n}$. That is $H_n+F_{n}'=V_{n}$. Note that $$H_{n+1}-H_n=F_n+F'_n+1.$$

Following the work of Häggström \cite{Haggstrom1994} for the case $c=1$ and \cite{Klenke2017} for general $c>0$, the $(F_n)$ and $(F_n')$ and $(W_n)$ are independent random variables and
\begin{itemize}
\item $W_n$ takes the values $0$ and $1$ each with probability $1/2$
\item $F_n$ and $F_n'$, $n\neq 0$, are geometrically distributed $\gamma_{1-\alpha}$ with parameter $1-\alpha$ with $\alpha$ defined in \equ{E1.02}.
\end{itemize}
Here
\begin{equation}
\label{E1.02}
\alpha:=c+1-\sqrt{c^2+2c}\;\in\,(0,1),
\end{equation}
and the geometric distribution with parameter $a\in(0,1]$ is defined by
\begin{equation}\label{E1.03}
\gamma_{a}(k) = a\cdot (1-a)^k, \quad k=0,1,2,\ldots
\end{equation}
Note that $\alpha$ is a monotone decreasing function of $c$ and $\alpha\to0$ as $c\to\infty$ (and hence the $F_n$ and $F_n'$ tend to $0$) and $\alpha\to1$ as $c\to0$.

Clearly, $(H_n)_{n\in\Z}$ is a stationary renewal process and the renewal times $G_n:=H_{n+1}-H_n=F_{n}+F'_{n}+1$ have distribution $\gamma_{1-\alpha}\ast\gamma_{1-\alpha}\ast\delta_1$ for $n\neq 0$. For $n=0$, however, the gap $G_0$ is a size-biased pick of this distribution (waiting time paradox). That is,
\begin{equation}
\label{E1.04}
\begin{aligned}
\PP[G_0=k]&=\frac{k\,\PP[G_1=k]}{\EE[G_1]}\\[2mm]
&=\frac{{1-\alpha}}{1+\alpha}\,(1-\alpha)^2\,\alpha^{k-1}\,k^2,\quad k=1,2,3,\ldots.
\end{aligned}
\end{equation}
Roughly speaking, by symmetry, given $G_0$, both the position of the origin and the position of the rung at $V_0$ are uniformly distributed among the possible values $\{H_{0},\ldots,H_1-1\}$ and are independent. In other words, given $G_0$, the random variables $H_0$ and $F'_0$ are independent and
$$\PP[-H_0=j\given G_0=k]=\PP[F'_0=j\given G_0=k]=\frac1k\mfa j=0,\ldots,k-1.$$
Note that
$$V_0=H_0+F'_0$$
is the difference of two independent and uniformly distributed random variables $F'_0$ and $-H_0$ (given $G_0$).

Summing up, the random spanning tree $\CT$ can be described in terms of the independent random variables $(F_n)$, $(F'_n)$, $(W_n)$ and by $H_0$. The $F_n$ and $F'_n$, $n\neq 0$, are $\gamma_{1-\alpha}$ distributed while for $F_0$ and $F'_0$ a somewhat different distribution needs to be chosen. (Since we are interested in asymptotic properties only, we would not even need to know the precise  distributions of $F_0$ and $F'_0$.) Given these random variables, the positions of the rungs and the missing edges in $\CT$ are given by:

\begin{equation}
\label{E1.05}
H_n = H_0+\sum_{k=0}^{n-1}(F_k+F_k'+1)\mfalls n\geq 1,
\end{equation}
\begin{equation}
\label{E1.06}
H_n=H_0-\sum_{k=n}^{-1}(F_k+F_k'+1)\mfalls n\leq -1,
\end{equation}
and
\begin{equation}
\label{E1.07}
V_n=H_n+F'_n,\qquad n\in\Z.
\end{equation}
\par

\textbf{Random walk on the spanning tree}\par
We now define random walk on $\CT$ in the spirit of the random conductance model. Denote by $\P^\CT$ the probabilities for a fixed spanning tree $\CT$. Furthermore, we let
\begin{equation}
\label{E1.08}
\P=\int \P^\CT\,\PP[d\CT]
\end{equation}
denote the annealed distribution and $\E$ its expectation.

Fix a parameter $\beta\geq1$ and attach to each edge in $\CT$ a weight (conductance)
\begin{equation}
\label{E1.09}
C(z_n)=C(h_{i,n})=\beta^n,\qquad n\in\Z,\,i=0,1.
\end{equation}
For $v\in\CT$, we write the sum of the conductances of adjacent edges by
$$C(v):=\sum_{e:\,v\in e}C(e).$$
Note that $C(v)$ depends on $\CT$ but this dependence is suppressed in the notation.

The random walk $X=(X^{(1)},X^{(2)})$ on $\CT$ chooses among its neighboring edges with a probability proportional to the edge weight. That is,
$$\P^\CT[X_{n+1}=w\given X_n=v]=\frac{C(\{v,w\})}{C(v)}\quad\mbox{for }\{v,w\}\in\CT.$$
Note that $X^{(1)}\in\{0,1\}$ is the vertical position and $X^{(2)}\in\Z$ is the horizontal position of $X$.

\subsection{Main Results}
\label{S1.3}
For $\beta>1$, this random walk has a bias to the right and we will see that it is in fact transient to the right and that the asymptotic speed
$$v:=\lim_{n\to\infty} \frac{X^{(2)}_n}{n}$$
exists. Since $\CT$ is ergodic, the value of $v$ does not depend on $\CT$ and is a deterministic function of $\alpha$ and $\beta$. In this paper, we give an explicit formula for $v$ and we discuss how $v$ depends on $\beta$ and on $\alpha$.
In particular, we see that $v$ is strictly positive if and only if $\beta < \beta_c^{(1)}:=1/\alpha$, and that $v$ is a unimodal function of $\beta$.
For random walk on the full ladder graph, the speed is a monotone function of $\beta$. However, in the spanning tree, right of the vertical edges there are dead ends of varying sizes where the random walk can spend large amounts of time if $\beta$ is large. Hence it can be expected that there exists a critical value $\beta_c^{(1)}=\beta_c^{(1)}(\alpha)$ such that $v>0$ for $\beta\in\big(1,\beta_c^{(1)}\big)$ and $v=0$ for $\beta\geq \beta_c^{(1)}$.
For $\beta < 1/\alpha$, let
\begin{equation}
\label{E1.10}
\begin{aligned}
s_+&:=\frac{1-\alpha}{1-\alpha\beta}\\
s_-&:=\frac{1-\alpha}{1-\alpha/\beta}
\end{aligned}
\end{equation}
and
\begin{equation}
\label{E1.11}
C=\frac{2\beta}{\beta-1}-\frac{1-s_-/\beta}{1-s_-^2/\beta}=\frac{2\beta}{\beta-1}-\frac{\beta-\alpha}{\beta-\alpha^2}.  \end{equation}
(For a more intuitive description of these quantities, see \eqref{E2.07}, \eqref{E2.08} and \eqref{E2.09} in the proof).
\begin{theorem}[Asymptotic Speed]
\label{T1}
Let $\beta_c^{(1)}:=1/\alpha$. For $\beta\geq\beta_c^{(1)}$, we have $v=v(\beta)=0$. For $\beta\in\big(1,\beta_c^{(1)}\big)$, we have $v(\beta)>0$ and the value of $v$ is given by
\begin{equation}
\label{E1.12}
\frac1{v(\beta)}=\frac{\beta+1}{\beta-1}+\frac{1-\alpha}{1+\alpha}\left(\frac{\beta+3}{2(\beta-1)}+\frac{\beta(\beta+1)}{(\beta-1)^2}(s_+-s_-)+C\,s_-\right).
\end{equation}
\end{theorem}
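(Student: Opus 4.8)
The plan is to turn the speed into a renewal--reward ratio and then evaluate the reward --- the mean time to cross one block between two successive missing rungs --- by electrical-network (reversibility) arguments.

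\textbf{Reduction to a per-block time.} Read off the environment in the i.i.d.\ blocks $(F_n,F'_n,W_n)$; this sequence is stationary and ergodic under the shift advancing by one missing edge. For $\beta>1$ the conductances $\beta^n$ bias the walk to the right, and each missing edge is a bridge of $\CT$ through which the walk must funnel (the cut row $W_n$ forces all left--right traffic through the single surviving horizontal edge in row $1-W_n$). I would use this to construct regeneration times --- the instants at which such a bridge is crossed rightward for the last time --- so that the increments of $\big(R_k,X^{(2)}_{R_k}\big)$ between regenerations are i.i.d. Standard regeneration theory together with Wald's identity then gives
\[
\frac1{v(\beta)}=\frac{\E[\tau_{\mathrm{block}}]}{\EE[G]}=\frac{1-\alpha}{1+\alpha}\,\E[\tau_{\mathrm{block}}],
\]
since $\EE[G]=\EE[F+F'+1]=\frac{1+\alpha}{1-\alpha}$; here $\tau_{\mathrm{block}}$ is the time to traverse one generic block ($F,F'\sim\gamma_{1-\alpha}$), the size-biased block at $0$ being negligible in the limit.

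\textbf{Block time as a sum of visit counts.} By reversibility with stationary weight $C(v)$, I would write $\tau_{\mathrm{block}}=\sum_v N_v$, where $N_v$ is the number of visits to $v$ before the walk leaves the block to the right. Away from the rung a spine column behaves like the biased walk on $\Z$ with step probabilities $\frac{\beta}{\beta+1},\frac1{\beta+1}$, whose return probability is $\frac{2}{\beta+1}$ and hence whose mean number of visits per site is $\frac{\beta+1}{\beta-1}$. Summing this bulk contribution over the $G$ spine columns gives $\frac{\beta+1}{\beta-1}\,G$, and after multiplication by the density $\frac{1-\alpha}{1+\alpha}$ this reproduces exactly the first term $\frac{\beta+1}{\beta-1}$ of \eqref{E1.12}. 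All remaining terms are an $O(1)$-per-block correction coming from the local structure at the rung $V_n$ (where the extra edge perturbs the visit counts) together with the two dead ends.

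\textbf{Trap excursions and averaging.} Off the rung hang two dead ends, of depths $F'_n$ (to the left) and $F_n$ (to the right). Because the conductance of a column is $\beta^{(\mathrm{column})}$, the right dead end points with the bias and is a genuine trap: the expected total time spent there during the traversal grows like $\beta^{F_n}$, while the left dead end yields a $\beta^{-F_n}$-type contribution. I would compute these excursion times exactly for fixed depths by series/parallel reduction of the tree (tracking the rung conductance $\beta^{V_n}$ and the two cases $W_n=W_{n+1}$ and $W_n\neq W_{n+1}$, each of probability $\tfrac12$ --- the origin of the factor $2$ in $\frac{\beta+3}{2(\beta-1)}$). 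Averaging over $F_n,F'_n\sim\gamma_{1-\alpha}$ and the fair coins $W_n$ via $\EE[\beta^{F}]=\frac{1-\alpha}{1-\alpha\beta}=s_+$ and $\EE[\beta^{-F}]=\frac{1-\alpha}{1-\alpha/\beta}=s_-$ (and the geometric sums over repeated returns to the rung that collapse into the constant $C$) turns the correction into $\frac{\beta+3}{2(\beta-1)}+\frac{\beta(\beta+1)}{(\beta-1)^2}(s_+-s_-)+C\,s_-$, completing \eqref{E1.12}. For $\beta\geq1/\alpha$ one has $\alpha\beta\geq1$, so $s_+=\EE[\beta^{F}]=\infty$: the mean trap time per block diverges, forcing $v=0$ and giving the transition at $\beta_c^{(1)}=1/\alpha$.

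\textbf{Main obstacle.} The substance is the exact evaluation in the third step: obtaining closed forms (not just the $\beta^{\pm F}$ growth order) for the expected times spent in the two teeth, correctly incorporating the rung and the possible row switch of the spine, and organising the bookkeeping of repeated returns so that everything collapses into the compact constant $C$ and the combination $\frac{\beta+3}{2(\beta-1)}$. The regeneration setup of the first step is standard, and the averaging of the last step is routine generating-function calculus once the fixed-depth times are in hand.
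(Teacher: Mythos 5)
Your overall skeleton --- reduce the speed to an expected per-block crossing time via stationarity of the blocks $(F_n,F'_n,W_n)$, evaluate that time by electrical-network arguments, and average with $\EE[\beta^{F}]=s_+$, $\EE[\beta^{-F}]=s_-$ --- matches the paper's strategy, and your reading of the phase transition (divergence of $s_+$ at $\beta=1/\alpha$ forces $v=0$) is correct. However, there is a genuine gap at the heart of the argument: the closed-form evaluation of the block time, which \emph{is} the content of Theorem~\ref{T1}, is deferred (``I would compute these excursion times exactly \dots''), and the idea that makes this evaluation possible is absent. The paper's proof rests on the mean-return-time identity \equ{E2.04}--\equ{E2.06}: for the reversible walk, the expected time to advance one step to the right on the ray equals $2\sum_{e}C(e)/C(\phi(0),\phi(1))-1$, where the sum runs over \emph{all} edges reachable from $\phi(0)$ without touching $\phi(1)$. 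This one identity absorbs simultaneously all trap excursions, all backtracking into earlier blocks, and all revisit counts; what remains is to average a sum of edge weights over the tree law, which is exactly where $C$ of \equ{E2.07}, $s_+$ and $s_-$ enter. Your plan instead factorizes the block time as (visit counts to ray points) $\times$ (mean excursion time per visit). That is precisely the ``tempting but wrong'' computation the paper warns against in Section~\ref{S2.1}: the expected number of visits to a ray point is not the universal constant $(\beta+1)/(\beta-1)$ but depends on the local ray geometry (rungs, row switches, distance to the next bridge), and it is \emph{correlated} with the depth of the trap hanging at that point, because both are functions of the same block variables --- e.g.\ for a type~(a) trap the dead-end depth $F_n$ also determines how long the ray runs straight to the right of the turning point. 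Without a Green's-function computation that handles this dependence, or the return-time identity that bypasses it, your third step is a statement of intent rather than a proof. A symptom of this is your gloss of $C$ as ``geometric sums over repeated returns to the rung'': in fact $C$ is the expected total conductance of all edges strictly to the left of a missing-edge column (accounting for arbitrarily deep backtracking), and it is hard to see the bookkeeping you describe producing that quantity.

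There is also a smaller gap in the reduction step. Regeneration times at last rightward bridge crossings do give i.i.d.\ increments, but the blocks contained in a regeneration epoch are \emph{not} generic blocks: the final one is conditioned on never being re-entered and the epoch is size-biased, and the number of blocks per epoch is not independent of their crossing times, so Wald's identity does not convert the regeneration ratio into $\EE[G]/\E[\tau_{\mathrm{block}}]$ with $\tau_{\mathrm{block}}$ a generic-block crossing time. The clean route --- and the paper's --- is Birkhoff's ergodic theorem applied to the stationary ergodic sequence of crossing times $T_{n+1}-T_n$ (each of which includes the time spent backtracking to the left of $H_n$); this yields $1/v=\frac{1-\alpha}{1+\alpha}\,\E[\tau_{\mathrm{block}}]$ directly, and no regeneration structure is needed for Theorem~\ref{T1} (the paper only introduces regenerations later, for the CLT of Theorem~\ref{T2}).
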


Note that $\frac{\beta-1}{\beta+1}$ is the asymptotic speed of random walk on $\Z$ with a bias $\beta$ to the right. The remaining terms on the r.h.s.{} of \equ{E1.12} describe the slowdown due to (1) traps and (2) the lengthening of the path due to the need to pass vertical edges. Note that $C > 0$ and $(s_+-s_-) > 0$.

Clearly, $v(\beta) = 0$ for $\beta \in \{1, 1/\alpha\}$. To see that for each value of $\alpha \in (0,1)$, $\beta\mapsto v(\beta)$ is a unimodal function, it suffices to show that $\beta \mapsto 1/v(\beta)$ is convex on $(1, 1/\alpha)$, and the readers can convince themselves from \eqref{E1.12} that this is the case since $\beta \mapsto 1/v(\beta)$ is a sum of convex functions.
\begin{figure}[ht]
\centerline{\includegraphics[width=12cm]{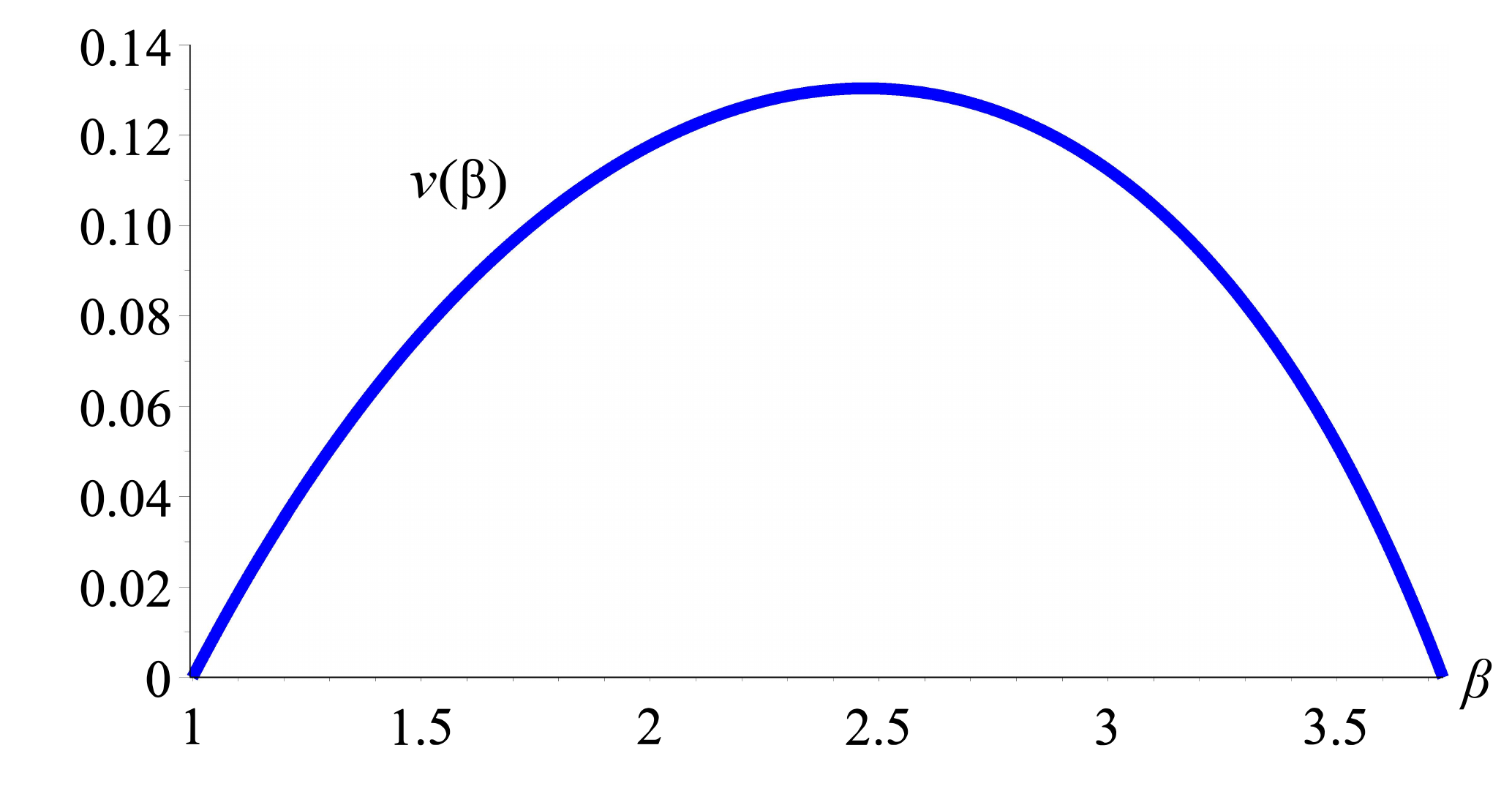}}
\caption[b]{A sketch of $\beta\mapsto v(\beta)$ for $\alpha=2-\sqrt{3}=0.2679\ldots$ ($c=1$).}
\label{F1.3}
\end{figure}

The explicit formula for the speed allows to investigate the dependence on the parameters $\beta$ and $\alpha$.
Taking the limit of $v$ as $\alpha\to0$ (which amounts to $c\to\infty$) in \equ{E1.12}, we get
\begin{equation}
\label{E1.13}
\lim_{\alpha\downarrow0}v=\frac{2(\beta-1)}{5\beta+7}.
\end{equation}
Note that this corresponds to the speed of a biased RW on a uniform spanning tree with all vertical edges.
The uniform spanning tree can be chosen as follows: for each pair of horizontal edges $h_{0,k}$, $h_{1,k}$, a fair coin flip decides which one is retained. For this case, the formula \equ{E1.13} could be derived directly by a straightforward (but not short) Markov chain argument.

Does the speed increase as $\alpha$ increases, since there are less vertical edges to slow down the random walk? Or does increasing $\alpha$ mean that the traps get larger and the speed decreases? The latter effect should be stronger for large $\beta$ and in fact we have
\begin{equation}
\label{E1.14}
\lim_{\alpha\downarrow0}\frac{\partial v}{\partial \alpha}=\frac{4(\beta-1)(\beta+1)(3-\beta)}{(5\beta+7)^2}
\end{equation}
which is positive if $\beta<3$ and negative if $\beta>3$. Hence, for fixed $\beta$, the value of the speed can either increase or decrease in the neighborhood of $\alpha = 0$.
\begin{figure}[ht]
\centerline{\includegraphics[width=7cm]{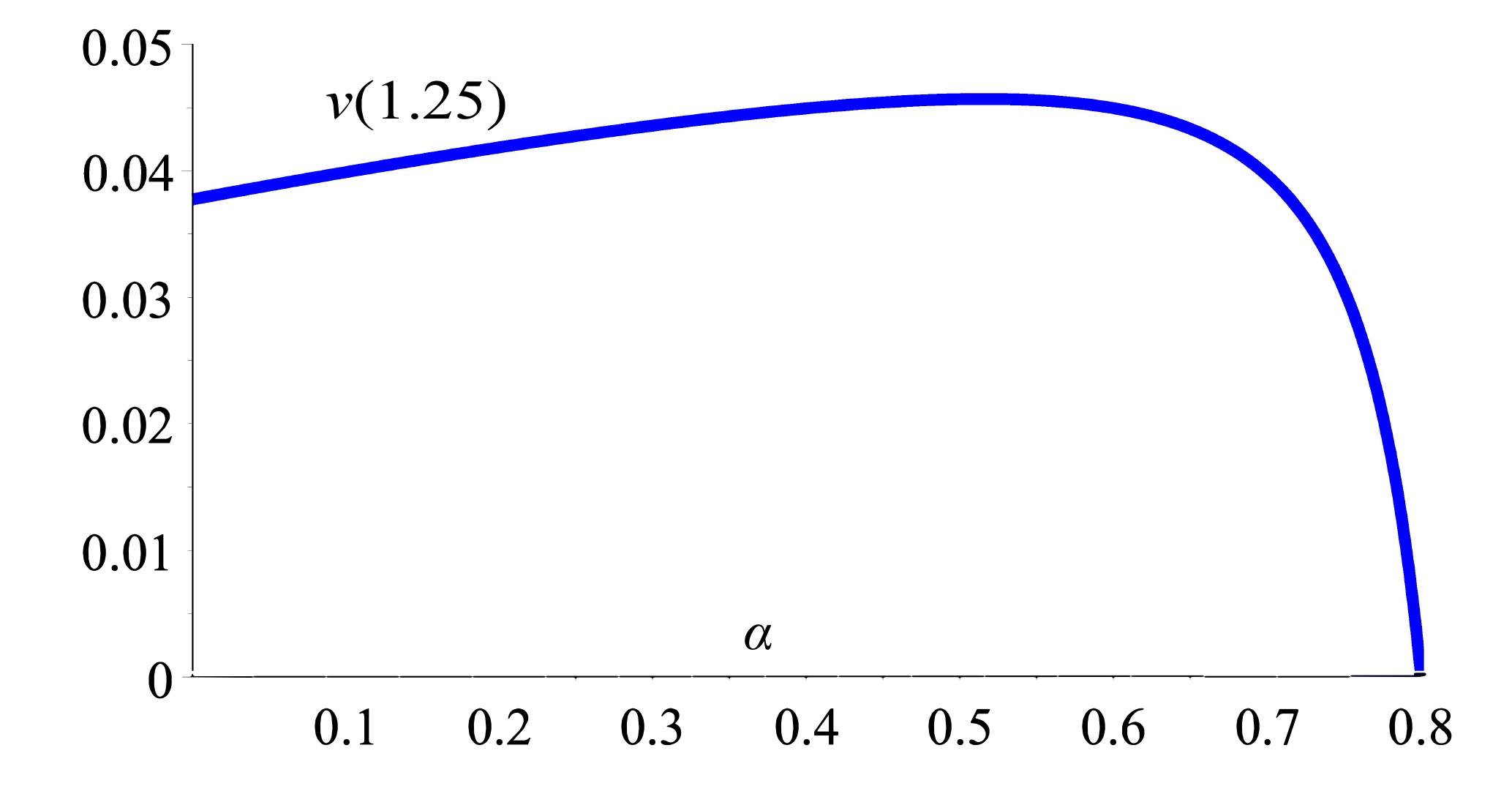}\quad\includegraphics[width=7cm]{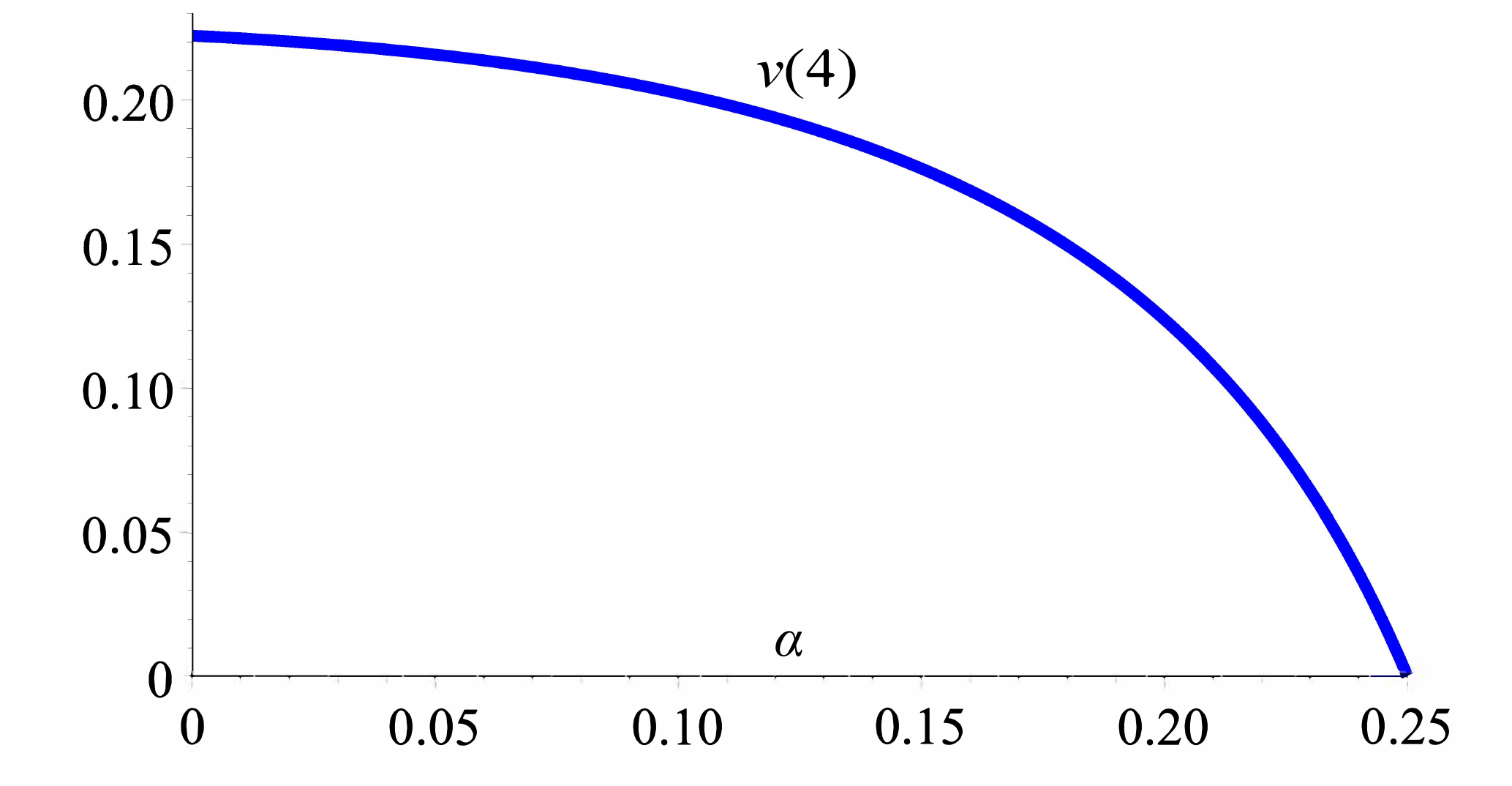}}
\caption[b]{A sketch of $v(1.25)$ (left) and $v(4)$ (right) as a function of $\alpha$.}
\label{F1.4}
\end{figure}

The next goal is to establish a central limit theorem in the ballistic regime, that is, in the regime where $v(\beta)>0$. As we will need second moments, we have to restrict the range of $\beta$ further.
Assume that $\beta\in\big(1,1/\sqrt{\alpha}\big)$. Note that $v(\beta)>0$ and that
$$\varrho:=-\frac{\log(\alpha)}{\log(\beta)}>2.$$

By Theorem 1.1 of \cite{GantertKlenke2022}, the time the random walk spends in a trap has tails with moments of all orders smaller than $\varrho$ but no moments larger than or equal to $\varrho$. Hence, the critical value $\beta_c^{(2)}$ for the existence of second moments is
\begin{equation}
\label{E1.15}
\beta_c^{(2)}:=\frac{1}{\sqrt{\alpha}}.
\end{equation}

For $\beta\in\big(1,\beta_c^{(2)}\big)$, second moments exist and this indicates that a central limit theorem should hold in this regime. Denote by $\CN_{0,1}$ the standard normal distribution.

\begin{theorem}
\label{T2}
Assume that $\beta\in\big(1,\beta_c^{(2)}\big)$. Then there exists a $\varsigma^2\in(0,\infty)$ such that the annealed laws converge to a standard normal distribution, i.e.
\begin{equation}
\label{E1.16}
\CL_\P\left[\frac{X^{(2)}_n-v(\beta) n}{\sqrt{\varsigma^2n}}\right]\limn \CN_{0,1}.
\end{equation}
\end{theorem}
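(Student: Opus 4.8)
The plan is to prove the annealed CLT via a regeneration-time decomposition, which is especially clean here because the environment splits into independent blocks. First I would define regeneration times along the right-directed backbone of $\CT$: call $\tau$ a regeneration time if the horizontal coordinate $X^{(2)}$ attains a strict running maximum at time $\tau$, the walk sits at a rung (vertical edge) at time $\tau$, and $X^{(2)}_m \ge X^{(2)}_\tau$ for every $m > \tau$, so that the walk crosses this rung to the right once and for all. Transience to the right, which holds throughout $\beta \in (1, 1/\alpha)$ and hence for $\beta < \beta_c^{(2)}$, guarantees infinitely many such times $\tau_1 < \tau_2 < \cdots$ almost surely. Because a regeneration rung can never be crossed from the right again, and because the environment blocks between consecutive rungs are independent (the decomposition in terms of the i.i.d.\ variables $(F_n)$, $(F'_n)$, $(W_n)$), the increments
\[
\big(T_k, S_k\big) := \big(\tau_{k+1}-\tau_k,\; X^{(2)}_{\tau_{k+1}} - X^{(2)}_{\tau_k}\big), \qquad k \ge 1,
\]
are i.i.d.\ under $\P$, while the initial segment $(\tau_1, X^{(2)}_{\tau_1})$ has a possibly different but almost surely finite law. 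Making this renewal structure rigorous --- verifying that conditioning on a regeneration genuinely refreshes the environment to the right --- is the first substantial step, to be carried out via the strong Markov property combined with the spatial independence of the blocks.

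Granting the i.i.d.\ structure and the finite second moments discussed below, the CLT reduces to the standard limit theorem for renewal-reward processes. Writing $N_n$ for the number of regenerations up to time $n$, one decomposes $X^{(2)}_n$ into $X^{(2)}_{\tau_{N_n}}$ plus a negligible boundary term, obtains the speed identity
\[
v(\beta) = \frac{\E[S_1]}{\E[T_1]},
\]
which must be reconciled with Theorem \ref{T1}, and sees that the fluctuations of $X^{(2)}_n - v(\beta)n$ are governed by the partial sums of the centred i.i.d.\ variables $S_k - v(\beta)\,T_k$. A Donsker-type argument together with the law of large numbers $N_n / n \to 1/\E[T_1]$ then yields \eqref{E1.16} with
\[
\varsigma^2 = \frac{\Var\big(S_1 - v(\beta)\,T_1\big)}{\E[T_1]}.
\]

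The hard part will be the finite-second-moment estimate $\E[T_1^2] < \infty$; the companion bound $\E[S_1^2] < \infty$ is routine, since $S_1$ is dominated by the renewal gaps, which have geometric tails. The regeneration time $T_1$ is a sum of backbone-traversal times and of excursion times into the dead-end traps met inside the block. The number of traps in a block has exponential tails by the renewal structure, and the independence of blocks lets me reduce $\E[T_1^2]$ to the second moment of a single trap's holding time. This is exactly where the hypothesis $\beta < \beta_c^{(2)} = 1/\sqrt{\alpha}$ is used: by Theorem~1.1 of \cite{GantertKlenke2022} a single trap time has finite moments of every order strictly below $\varrho = -\log(\alpha)/\log(\beta)$, and $\beta < 1/\sqrt{\alpha}$ is precisely the condition $\varrho > 2$ that secures a finite second moment. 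Summing over the independent, geometrically many traps then gives $\E[T_1^2] < \infty$.

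It remains to check $\varsigma^2 \in (0,\infty)$. Finiteness is immediate from the second-moment bounds just obtained. For positivity I would argue non-degeneracy: a single block may contain differing numbers of traps and the walk's excursions into them are genuinely random, so $S_1 - v(\beta)\,T_1$ cannot be almost surely constant, whence $\Var\big(S_1 - v(\beta)\,T_1\big) > 0$. This completes the plan.
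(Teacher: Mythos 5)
Your overall strategy --- regeneration times, second moments via the trap-tail estimate of \cite{GantertKlenke2022} under $\varrho>2$, then a renewal-reward CLT --- is the same as the paper's (Section~\ref{S3}, following \cite{Bowditch2019}). But your specific choice of cut points breaks your central i.i.d.\ claim. You regenerate at rungs that the walk ``crosses once and for all''. Every rung on the ray has a dead end attached to its \emph{far} vertex that points to the \emph{left}: if the rung sits at horizontal position $V_n$, the branch through $(i',H_n),\dots,(i',V_n-1)$ hangs off the far vertex $(i',V_n)$ and occupies columns $<V_n$ (here $i'$ is the row on which the ray continues). Two consequences. First, your no-backtracking condition $X^{(2)}_m\ge X^{(2)}_\tau$ forces the walk to avoid this branch after $\tau$, and the conditional law of the post-$\tau$ trajectory depends on whether the first edge of that branch is present, i.e.\ on $\1_{\{F'_n\ge1\}}$: the probability of stepping right from $(i',V_n)$ is $\beta/(2\beta+1)$ if it is present and $1/2$ if it is not. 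Second, the length $F'_n$ of that very branch is explored by the walk during the \emph{previous} increment. Hence consecutive increments $(T_k,S_k)$ and $(T_{k+1},S_{k+1})$ share the randomness of $F'$ at the common cut rung: they are $1$-dependent rather than independent, so the i.i.d.\ renewal-reward CLT and the variance formula $\varsigma^2=\Var\big(S_1-v(\beta)T_1\big)/\E[T_1]$ do not apply as stated. This is repairable --- either invoke a CLT for stationary $1$-dependent sequences with the covariance correction, or, better, cut where the paper cuts: at the missing-horizontal-edge columns $H_n$, where the tree is pinched to the single edge $e_n$, the edge configuration touching the cut column from the left is deterministic, and the post-regeneration law genuinely depends only on the fresh blocks to the right.

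Two further points. (i) Even with good cut points, the events ``$n$ is a regeneration point'' are dependent across $n$; the paper handles this by minorizing the set of regeneration points by a Bernoulli process independent of $\CT$ (see \equ{E3.08}), and you will need some such device before you can speak of i.i.d.\ or geometric regeneration gaps. (ii) Your second-moment step, ``summing over the independent, geometrically many traps'', glosses over a real dependence: the number of blocks traversed before regeneration and the times spent in them are functionals of the same walk, so you cannot simply multiply a geometric count by a single-trap second moment. The paper decouples them with H\"older's inequality (see \equ{E3.16}), pairing moments of order $\zeta\in(2,\varrho)$ for the block occupation times with the exponential tails of the regeneration gaps. Note that this uses $\varrho>2$ \emph{strictly} --- exactly the room your hypothesis $\beta<\beta_c^{(2)}$ provides; mere finiteness of second moments of individual trap times would not be enough to run this argument.
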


Note that
\begin{equation}
\label{E1.17}
\beta_c^{(2)}<\beta_{\mathrm{max}}
\end{equation}
where $\beta_{\mathrm{max}}$ is the value $\beta$ for which $v(\beta)$ is maximal. In fact, an explicit calculation gives
$$\frac{\partial v}{\partial \beta}\restbig{\beta=\beta_c^{(2)}}=\frac{p(\beta)}{q(\beta)}$$
with
$$\begin{aligned}p(\beta)&=
4 \Big( 5\,{\beta}^{14}+20\,{\beta}^{13}+51\,{\beta}^{12}+94\,{\beta}^{11}+
141\,{\beta}^{10}+180\,{\beta}^{9}+203\,{\beta}^{8}+196\,{\beta}^{7}\\
&\qquad +164\,{\beta}^{6}+118
\,{\beta}^{5}+72\,{\beta}^{4}+34\,{\beta}^{3}+14\,{\beta}^{2}+6\,\beta+2 \Big)  \left( {
\beta}^{2}+1 \right)\\
q(\beta)&= \left( 5\,{\beta}^{9}+19\,{\beta}^{8}+36\,{\beta}^{7}+51\,{\beta}^
{6}+57\,{\beta}^{5}+47\,{\beta}^{4}+33\,{\beta}^{3}+22\,{\beta}^{2}+9\,\beta+1 \right) ^{2
}.\end{aligned}$$
As all coefficients are positive, we have
$$\frac{\partial v}{\partial \beta}\restbig{\beta=\beta_c^{(2)}}>0.$$

In the case $\beta\in\big(\beta_c^{(2)},\beta_c^{(1)}\big)$, we are still in the ballistic regime but second moments of the time spent in traps fail to exist. In fact, the $p$th moment exists if and only if $p<\varrho$. See
\cite{GantertKlenke2022}. Hence we might ask if a proper rescaling yields convergence to a stable law. This, however, cannot be expected to hold since the time that the random walk spends in a random trap does not have regularly varying tails. See \cite{GantertKlenke2022} for a detailed discussion.

Compare the situation to the random comb model (with exponential tails) (see \cite[Fig. 3 and Fig. 9 (b)]{KotakBarma}): There are two critical values $g_1>g_2>0$ for the drift $g$ such that the following happens.
\begin{itemize}
\item For small drift $0<g<g_2$, the random comb is in the Normal Transport regime (NT). That is, the speed is positive and the second moments are finite. The phase transition at $g_2$ is of second order, that is, the second moments diverge. Also for our model, the second moments of the time spent in traps diverges as $\beta\uparrow\beta_c^{(2)}$ as can be read off from the explicit formula of the tails given in \cite{GantertKlenke2022}.
\item For $g_2 < g < g_1$, the random comb is in the Anomalous Fluctuation regime (AF). The speed is positive but the second moments are infinite. The phase transition at $g_1$ is of second order, that is, the speed converges to $0$ as $g\to g_1$. Also for our random walk on the random spanning tree, the speed decreases to $0$ as $\beta\uparrow\beta_c^{(1)}$.
\item For $g_1<g$, the random comb is in the Vanishing Velocity regime (VV) where the speed is zero.
\end{itemize}
Also in the random comb model, the speed is an increasing function of the drift $g$ in the NT regime (for exponential tails), as can be seen in \cite[Fig. 9 (b)]{KotakBarma}, and the speed is maximal at some $g_{\mathrm{max}}>g_2$ just as we have shown for our model in \equ{E1.17}.

We come to the final goal of this paper. In the unbiased case $\beta=1$, all moments of the time spent in traps exist and a central limit theorem should hold. Moreover, the variance $\sigma^2$ should be given by the Einstein relation
\begin{equation}
\label{E1.18}
\sigma^2=2\frac{\partial v}{\partial \beta}\restbig{\beta=1}=\frac{1+\alpha}{3+\alpha}
\end{equation}
where the second equality can easily checked by an explicit computation.
We show that this is indeed true and that a quenched invariance principle holds true with this value of $\sigma^2$.
\begin{theorem}[Quenched Functional Central Limit Theorem]
\label{T3}
Let $\beta=1$ and let $\sigma^2$ be given by \equ{E1.18}. The process
$$\left(\frac{1}{\sqrt{\sigma^2\,n}}X^{(2)}_{\lfloor tn\rfloor}\right)_{t\geq0}$$ converges in $\P^\CT$-distribution in the Skorohod space $D_{[0,\infty)}$ to a standard Brownian motion for $\PP[d\CT]$ almost all spanning trees $\CT$.
\end{theorem}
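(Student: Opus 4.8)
The plan is to exploit the very rigid geometry of $\CT$ at $\beta=1$, where the walk is simple (unit-conductance) random walk on the tree. First I would make explicit the decomposition of $\CT$ into a bi-infinite \emph{backbone} $\CB=(b_j)_{j\in\Z}$, the unique two-sided self-avoiding path in $\CT$, together with the finite \emph{traps} hanging off it; this is available $\PP$-a.s.\ from the renewal description of the environment. Every vertex $v$ then has a well-defined base $b(v)\in\CB$ obtained by collapsing its trap, and I set $J(v):=j$ when $b(v)=b_j$. The crucial observation is that $J$ is a harmonic coordinate for the SRW: a harmonic function must be constant on each finite trap (no current can enter a dead end), equal to its value at the attachment vertex, and at a backbone vertex $b_j$ carrying $t_j$ trap-neighbours the identity $\deg(b_j)=2+t_j$ together with this constancy reduces harmonicity to $2J(b_j)=J(b_{j-1})+J(b_{j+1})$, so that $J$ is affine along $\CB$ and equals the backbone index. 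Consequently $J_n:=J(X_n)$ is a $\P^\CT$-martingale for \emph{every} fixed tree $\CT$, with increments in $\{-1,0,+1\}$ that are nonzero exactly when the walk traverses a backbone edge.

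The next step is a quenched martingale functional CLT for $J_n$. Since the increments are bounded, the only substantive input is the quenched convergence of the quadratic variation $[J]_n=\#\{k<n:\text{step }k\text{ is a backbone step}\}$, namely $\tfrac1n[J]_{\lfloor tn\rfloor}\to\rho\,t$ for $\P^\CT$-almost every trajectory and $\PP$-almost every $\CT$, with a deterministic $\rho\in(0,1)$. I would obtain this from the ergodic theorem applied to the environment viewed from the particle, i.e.\ the Markov chain $\big(\theta_{X^{(2)}_n}\CT,\,X^{(1)}_n\big)$ of horizontally recentred trees: reversibility of the SRW makes the degree-biased law $\hat\PP$ (proportional to the degree of the origin under $\PP$) stationary and reversible for this chain, and the renewal, hence mixing, structure of $(H_n)$ gives ergodicity. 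The size-biased gap $G_0$ at the origin affects only a neighbourhood of the start and is irrelevant for the asymptotics. A quenched martingale invariance principle then yields $\big(n^{-1/2}J_{\lfloor tn\rfloor}\big)_{t\ge0}\Rightarrow \sqrt\rho\,B$ in $D_{[0,\infty)}$ for $\PP$-a.e.\ $\CT$.

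It remains to transfer this to the horizontal coordinate and to identify the constant. Writing $\xi(b_j)$ for the horizontal position of $b_j$, the increments $\xi(b_j)-\xi(b_{j-1})\in\{0,1\}$ (the backbone is monotone in the horizontal direction) form a stationary ergodic sequence, so $\xi(b_j)=\kappa\,j+o(j)$ with a deterministic mean rate $\kappa$; moreover $|X^{(2)}_n-\xi(b_{J_n})|$ is bounded by the horizontal size of the local trap, which has exponential tails. Since $\max_{m\le Tn}|J_m|=O(\sqrt n)$, both error terms are $o(\sqrt n)$ uniformly on compact time intervals, whence $\sup_{t\le T}\big|n^{-1/2}X^{(2)}_{\lfloor tn\rfloor}-\kappa\,n^{-1/2}J_{\lfloor tn\rfloor}\big|\to0$ quenched. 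This gives the functional CLT for $X^{(2)}$ with variance $\sigma^2=\kappa^2\rho$. Computing $\kappa$ (mean horizontal backbone progress per backbone step) and $\rho$ (the $\hat\PP$-expected rate of backbone steps) explicitly from the block law of $(F_n,F'_n,W_n)$ and checking $\kappa^2\rho=\tfrac{1+\alpha}{3+\alpha}$ then confirms the value in \equ{E1.18} and, via Theorem~\ref{T1}, the Einstein relation.

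The main obstacle I anticipate is the quenched ergodic input of the second step: turning the annealed stationarity of the environment-from-the-particle chain into $\P^\CT$-almost sure convergence of $\tfrac1n[J]_n$ for $\PP$-a.e.\ fixed $\CT$, and likewise the quenched (rather than merely annealed) martingale CLT. This is where the reversibility and the renewal/mixing structure must be combined carefully; the harmonic-coordinate identity and the explicit block law make the remaining algebra, the evaluation of $\kappa$ and $\rho$, routine by comparison.
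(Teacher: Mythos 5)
Your proposal is correct, and it reaches the right constant, but it takes a genuinely different route from the paper. The paper's proof replaces $X$ by the walk $\tilde X$ that waits at trap entrances, represents $\tilde X$ as a randomly trapped random walk on $\Z$ (simple random walk $Y$ on the ray, time-changed by holding times $\nu_k$ attached to the ray vertices), and then invokes Theorem 2.10 of Ben Arous--Cabezas--\v{C}ern\'y--Royfman, observing that the i.i.d.\ assumption on $(\nu_k)$ in that theorem can be relaxed to ergodicity, which holds here by the block structure of $\CT$; the two constants are then supplied by Lemma~\ref{L4.01} (spatial rate $\phi^{(1)}(n)/n\to\frac{2(1+\alpha)}{3+\alpha}$, your $\kappa$) and Lemma~\ref{LemmaCLT2} (mean holding time $m=\frac{4(1+\alpha)}{3+\alpha}$, i.e.\ your $\rho=1/m$), giving $\sigma^2=\kappa^2/m=\frac{1+\alpha}{3+\alpha}$ as in \equ{E1.18}. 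You instead observe that at $\beta=1$ the backbone index $J$ is \emph{exactly} harmonic on $\CT$ (your verification is correct: constancy on traps plus $2J(b_j)=J(b_{j-1})+J(b_{j+1})$), so $J(X_n)$ is a $\P^\CT$-martingale with bounded increments for every fixed tree, and the quenched FCLT reduces to the quenched law of large numbers for the quadratic variation, obtained from the ergodic theorem for the degree-biased environment-viewed-from-the-particle chain. This trade is sensible: the paper's route outsources the probabilistic core to an external theorem but must re-inspect its proof to weaken independence to ergodicity, while your route is self-contained modulo standard tools (martingale FCLT plus Kozlov-type ergodicity of the environment chain, which does require the care you flag, since $\PP$ is stationary only under horizontal shifts and the state must carry the vertical coordinate); a notable advantage of your argument is that it is automatically quenched, since the martingale property holds under $\P^\CT$ for each fixed $\CT$, with no corrector needed. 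Your constants check out against the paper's: $\rho=\frac{2\,\EE[\#\{i:(i,0)\in\Ray\}]}{\EE[\deg((0,0))+\deg((1,0))]}=\frac{3+\alpha}{4(1+\alpha)}$ and $\kappa=\frac{2(1+\alpha)}{3+\alpha}$, so $\kappa^2\rho=\frac{1+\alpha}{3+\alpha}$, confirming \equ{E1.18}.
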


\begin{figure}[ht]
\centerline{\includegraphics[height=7cm]{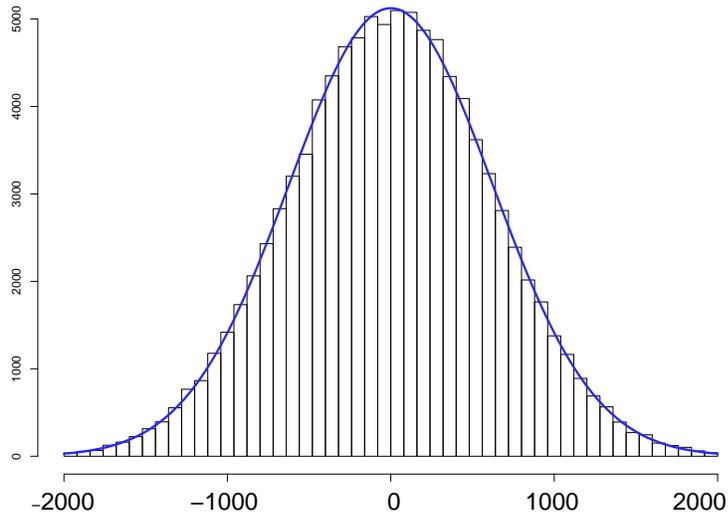}}
\caption[b]{Uniform spanning tree ($c=1$, $\alpha=2-\sqrt{3}$). No drift: $\beta=1$. The histogram shows the endpoints as a result of 100\,000 simulations of 1\,000\,000 steps of RWRE. The curve shows the (scaled) density of the normal distribution with variance $10\,000\times\frac{1+\alpha}{3+\alpha}$ as suggested by the CLT.}
\label{F1.5}
\end{figure}

\subsection{Organization of the paper}
\label{S1.4}
The rest of the paper is organized as follows.
In Section~\ref{S2}, we study the random conductances in some more detail. We also decompose the spanning tree into building blocks and compute the expected times the random walks spends in these blocks. We put things together to get the explicit formula for the speed and prove Theorem~\ref{T1}.

In Section~\ref{S3}, we use a regeneration time argument to infer the central limit theorem in the ballistic regime.

In Section~\ref{S4}, we prove the Einstein relation (Theorem~\ref{T3}) by computing second moments.
\Section{Conductances Model and Proof of Theorem~\ref{T1}}
\label{S2}
\subsection{Some more considerations on the spanning tree}
\label{S2.1}
In the spanning tree $\CT$, there is a unique ray (self avoiding path) from the left to the right. We denote the ray by $\Ray$. We can enumerate the ray by the positive integers following it from $(0,0)$ [or (1,0) if (0,0) is not in the ray] to the right and by the negative integers going to the left. Let $\phi(i)=(\phi^{(1)}(i),\phi^{(2)}(i))$, $i\in\Z$, be this enumeration.

The basic idea is to decompose the random walk $X$ into a random walk $Y$ that makes steps only on $\Ray$ with edge weights given by \equ{E1.09}. That is, if $Y_n$ is in the ray and also the position one step to the right, that is $\phi(\phi^{-1}(Y_n)+1)=Y_n+(0,1)$, is in the ray, then  $\P^\CT[Y_{n+1}=\phi(\phi^{-1}(Y_n)+1)\Given Y_n]=\beta/(1+\beta)$ and
$\P^\CT[Y_{n+1}=\phi(\phi^{-1}(Y_n)-1)\Given Y_n]=1/(1+\beta)$. Otherwise these probabilities are both $1/2$. Now assume that we attach random holding times to $Y$ that model the times that $X$ spends in the traps splitting from the ray. In most cases, of course, these holding times are simply 1 since there are no traps.

There are three kinds of traps:
\begin{itemize}
\item[(a)] A horizontal edge splits to the right of the ray: The ray makes a step down (or up) and the trap consists of a number, say $k$, of horizontal edges to the right of the turning point. See Figure \ref{F2.1}.
\item[(b)] A horizontal edge splits to the left of the ray: The ray has just made a step down (or up) and now turns to the right again. The trap consists of a number, say $l$, of horizontal edges to the left of the turning point. See Figure \ref{F2.2}.
\item[(c)] A vertical edge splits from the ray. The ray has just made a step to the right and will make another step to the right. A vertical edge either splits to the top or the bottom of the ray. At the other end of this vertical edge, there are $k$ horizontal edges to the right and $l$ horizontal edges to the left. See Figure \ref{F2.3}.
\end{itemize}

\begin{figure}[ht]
\centerline{
\begin{tikzpicture}[scale=1.5]
\def\myrad{0.14cm}
\draw[color=blue, line width = 0.5mm] (0,1) -- (3,1);
\draw[color=red, line width = 0.5mm] (3,1) -- (6,1);
\draw (7,1) -- (8,1);
\draw (1,0) -- (3,0);
\draw[color=blue, line width = 0.5mm] (3,0) -- (8,0);
\draw (7,0) -- (8,0);
\draw[color=blue, line width = 0.5mm] (3,0) -- (3,1);
\shadedraw(0,1) circle(\myrad) ;
\shadedraw (1,1) circle(\myrad);
\shadedraw (2,1) circle(\myrad);
\draw[fill, color=teal]  (3,1) circle(\myrad);
\draw  (3,1) circle(\myrad);
\shadedraw (4,1) circle(\myrad);
\shadedraw (5,1) circle(\myrad);
\shadedraw (6,1) circle(\myrad);
\shadedraw (7,1) circle(\myrad);
\shadedraw (8,1) circle(\myrad);
\shadedraw (0,0) circle(\myrad) ;
\shadedraw (1,0) circle(\myrad);
\shadedraw (2,0) circle(\myrad);
\shadedraw (3,0) circle(\myrad);
\shadedraw (4,0) circle(\myrad);
\shadedraw (5,0) circle(\myrad);
\shadedraw (6,0) circle(\myrad);
\shadedraw (7,0) circle(\myrad);
\shadedraw (8,0) circle(\myrad);
\draw (1.5,-0.0) node[anchor=north]{$\beta^{-1}$};
\draw (1.5, 1.6) node[anchor=north]{$\beta^{-1}$};
\draw (2.5,-0.0) node[anchor=north]{$1$};
\draw (2.5, 1.6) node[anchor=north]{$1$};
\draw (2.8, 0.8) node[anchor=north]{$1$};
\draw (3.5,-0.0) node[anchor=north]{$\beta$};
\draw (3.5, 1.6) node[anchor=north]{$\beta$};
\draw (4.5,-0.0) node[anchor=north]{$\beta^2$};
\draw (4.5, 1.6) node[anchor=north]{$\beta^2$};
\draw (5.5,-0.0) node[anchor=north]{$\beta^3 $};
\draw (5.5, 1.6) node[anchor=north]{$\beta^3$};
\end{tikzpicture}
}\caption{Trap (a) (in red) with  $k=3$. Initial point in green, ray in blue.}
\label{F2.1}
\end{figure}
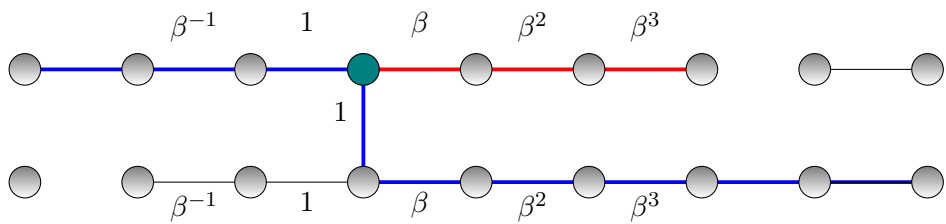

\begin{figure}[ht]
\centerline{
\begin{tikzpicture}[scale=1.5]
\def\myrad{0.14cm}
\draw[color=blue, line width = 0.5mm] (0,1) -- (3,1);
\draw (3,1) -- (6,1);
\draw (7,1) -- (8,1);
\draw[color=red, line width = 0.5mm] (1,0) -- (3,0);
\draw[color=blue, line width = 0.5mm] (3,0) -- (8,0);
\draw (7,0) -- (8,0);
\draw[color=blue, line width = 0.5mm] (3,0) -- (3,1);
\shadedraw(0,1) circle(\myrad) ;
\shadedraw (1,1) circle(\myrad);
\shadedraw (2,1) circle(\myrad);
\shadedraw (3,1) circle(\myrad);
\shadedraw (4,1) circle(\myrad);
\shadedraw (5,1) circle(\myrad);
\shadedraw (6,1) circle(\myrad);
\shadedraw (7,1) circle(\myrad);
\shadedraw (8,1) circle(\myrad);
\shadedraw (0,0) circle(\myrad) ;
\shadedraw (1,0) circle(\myrad);
\shadedraw (2,0) circle(\myrad);
\draw[fill, color=teal] (3,0) circle(\myrad);
\draw  (3,0) circle(\myrad);
\shadedraw (4,0) circle(\myrad);
\shadedraw (5,0) circle(\myrad);
\shadedraw (6,0) circle(\myrad);
\shadedraw (7,0) circle(\myrad);
\shadedraw (8,0) circle(\myrad);
\draw (1.5,-0.0) node[anchor=north]{$\beta^{-1}$};
\draw (1.5, 1.6) node[anchor=north]{$\beta^{-1}$};
\draw (2.5,-0.0) node[anchor=north]{$1$};
\draw (2.5, 1.6) node[anchor=north]{$1$};
\draw (2.8, 0.8) node[anchor=north]{$1$};
\draw (3.5,-0.0) node[anchor=north]{$\beta$};
\draw (3.5, 1.6) node[anchor=north]{$\beta$};
\draw (4.5,-0.0) node[anchor=north]{$\beta^2$};
\draw (4.5, 1.6) node[anchor=north]{$\beta^2$};
\draw (5.5,-0.0) node[anchor=north]{$\beta^3 $};
\draw (5.5, 1.6) node[anchor=north]{$\beta^3$};
\end{tikzpicture}
}\caption{Trap (b) (in red) with  $l=2$. Initial point in green, ray in blue.}
\label{F2.2}
\end{figure}
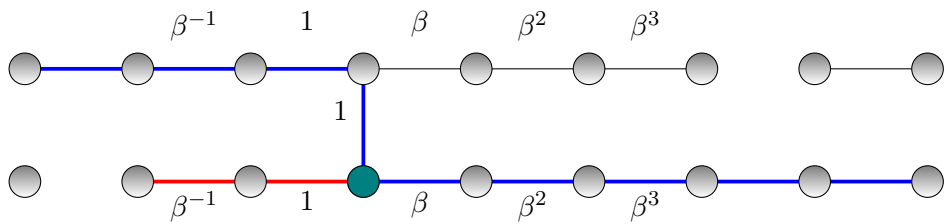
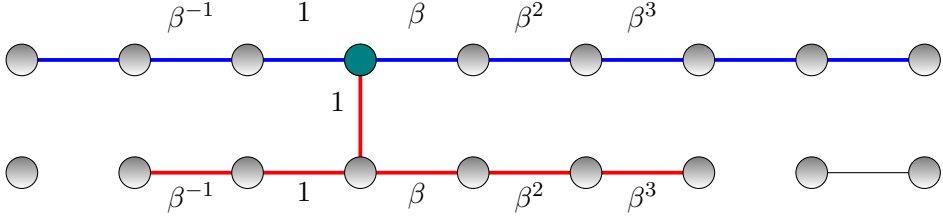
\begin{figure}[ht]
\centerline{
\begin{tikzpicture}[scale=1.5]
\def\myrad{0.14cm}
\draw[color=blue, line width = 0.5mm] (0,1) -- (8,1);
\draw[color=red, line width = 0.5mm] (1,0) -- (6,0);
\draw (7,0) -- (8,0);
\draw[color=red, line width = 0.5mm] (3,0) -- (3,1);
\shadedraw(0,1) circle(\myrad) ;
\shadedraw (1,1) circle(\myrad);
\shadedraw (2,1) circle(\myrad);
\draw[fill, color=teal]  (3,1) circle(\myrad);
\draw  (3,1) circle(\myrad);
\shadedraw (4,1) circle(\myrad);
\shadedraw (5,1) circle(\myrad);
\shadedraw (6,1) circle(\myrad);
\shadedraw (7,1) circle(\myrad);
\shadedraw (8,1) circle(\myrad);
\shadedraw (0,0) circle(\myrad) ;
\shadedraw (1,0) circle(\myrad);
\shadedraw (2,0) circle(\myrad);
\shadedraw (3,0) circle(\myrad);
\shadedraw (4,0) circle(\myrad);
\shadedraw (5,0) circle(\myrad);
\shadedraw (6,0) circle(\myrad);
\shadedraw (7,0) circle(\myrad);
\shadedraw (8,0) circle(\myrad);
\draw (1.5,-0.0) node[anchor=north]{$\beta^{-1}$};
\draw (1.5, 1.6) node[anchor=north]{$\beta^{-1}$};
\draw (2.5,-0.0) node[anchor=north]{$1$};
\draw (2.5, 1.6) node[anchor=north]{$1$};
\draw (2.8, 0.8) node[anchor=north]{$1$};
\draw (3.5,-0.0) node[anchor=north]{$\beta$};
\draw (3.5, 1.6) node[anchor=north]{$\beta$};
\draw (4.5,-0.0) node[anchor=north]{$\beta^2$};
\draw (4.5, 1.6) node[anchor=north]{$\beta^2$};
\draw (5.5,-0.0) node[anchor=north]{$\beta^3 $};
\draw (5.5, 1.6) node[anchor=north]{$\beta^3$};
\end{tikzpicture}
}\caption{Trap (c) with $k=3$ and $l=2$. Initial point in green, ray in blue.}
\label{F2.3}
\end{figure}

It is tempting to follow a simple (but wrong) argument to compute the asymptotic speed of $X$: For a given point on the ray, compute the probability $p$ that a trap starts at this point and compute the average time $\E[T]$, the walk spends in a trap. Then take the speed $v^Y$ of the random walk $Y$ on the ray and divide it by $1+p\E[T]$. What is wrong with the argument is that the expected numbers of visits to a given point on the ray vary in a subtle way depending on the ray. Hence, we will have to argue more carefully to prove Theorem~\ref{T1}. However, there is a nice simplification of our model where this approach works and as a warm-up we present this here.

Consider the spanning tree $\CT^u$ that is defined just as $\CT$ but the ray is $\{1\}\times\Z$. In other words, we would have $W_n=0$ for all $n$. In this case, there are only traps of type (c). Also, the random walk $Y^u$ on the ray is simply a random walk on $\Z$ with a drift $\frac{\beta-1}{\beta+1}$. Since the gaps between vertical edges have distribution $\gamma_{1-\alpha}\ast\gamma_{1-\alpha}\ast\delta_1$ they have expectation $\frac{1+\alpha}{1-\alpha}$. Hence, the probability for a given vertical edge to be in $\CT^u$
is
\begin{equation}
\label{E2.01}
p=\frac{1-\alpha}{1+\alpha}.
\end{equation}

Assume that the trap starts one step
below (or above in the general case $\CT$)
the initial point of the random walk and then splits into $l$ edges to the left and $k$ edges to the right. Let $T_{k,l}$ be the time, the random walk on $\CT^u$ spends in the trap before it makes the first step on the ray.
By Lemma~\ref{L2.03} below, we have
$$
\E^{\CT^u}[T_{k,l}]=
\frac{2}{1+\beta}\left(1+\frac{\beta}{\beta-1}\big(\beta^k-\beta^{-l}\big)\right).
$$
We still have to average over $k$ and $l$ to get, for $\beta < 1/\alpha$,
\begin{equation}
\label{E2.02}
\E[T]=\sum_{k,l=0}^\infty (1-\alpha)^2\,\alpha^{k+l}\,\E^\CT[T_{k,l}]=
\frac{2}{\beta+1}\left(1+\frac{\beta}{\beta-1}(s_+-s_-)\right)
\end{equation}
with $s_+$ and $s_-$ from \equ{E1.10}.
Summing up, we get that the biased random walk on $\CT^u$ has, for $\beta < 1/\alpha$, asymptotic speed
\begin{equation}
\label{E2.03}
v^u:=\frac{\beta-1}{\beta+1}\frac{1}{1+p\E[T]}=\frac{\beta-1}{\beta+1}\left(1+\frac{1-\alpha}{1+\alpha}\frac{2}{\beta+1}\left(1+\frac{\beta}{\beta-1}(s_+-s_-)\right)
\right)^{-1}.
\end{equation}

As indicated above, the computation of the speed $v$ of random walk on $\CT$ requires more care. We prepare for this in the next section by considering hitting times in the random conductance model.
\subsection{Conductance method and times in traps}
\label{S2.2}
If the random walker on the random spanning tree enters a dead end of the tree (a trap) it takes a random amount of time to get back to the ray of the tree. The average amount of time spent in a trap is the key quantity for computing the ballistic speed of the random walk. In this section, we first present the (well-known) formula that computes the average time to exit the trap in terms of the sum of edge weights \equ{E2.04}. Then we apply this formula to the three prototypes of traps: (a) a dead end to the right, (b) a dead end to the left and (c) a rung with dead ends to both sides.

Let $G=(V, E)$ be a connected graph and assume that we are given edge weights $C(x,y)=C(y,x)>0$ for all $\{x,y\}\in E$ and $C(x,y)=0$ otherwise. Let $C(x)=\sum_{y}C(x,y)$ and let
$$p(x,y)=\frac{C(x,y)}{C(x)},\quad x,y \in V$$
 be the transition probabilities of a reversible Markov chain $X$ on $V$. Assume that
$$C(V):=\sum_{y\in V}C(y)<\infty.$$ It is well known that the unique invariant distribution of this Markov chain is given by $\pi(x)= C(x)/C(V)$ for all $x\in V$. Let
$$\tau_x:=\inf\{n\geq1:X_n=x\}.$$ It is well known (see, e.g., \cite[Theorem 17.52]{Klenke2020e}) that
\begin{equation}
\label{E2.04}
\E_x[\tau_x]=\frac{1}{\pi(x)}=\frac{C(V)}{C(x)}.
\end{equation}

Now we assume that there is a point $x\in V$ with only one neighboring point $y$. Then
\begin{equation}
\label{E2.05}
\E_y[\tau_x]=\E_x[\tau_x]-1=\frac{C(V)}{C(x,y)}-1=2\,\frac{\sum_{e\in E}C(e)}{C(x,y)}-1.
\end{equation}

Quite similarly, assume that there are two points $x_1,x_2\in V$ each of which has only $y$ as a neighbor. Let $\tau_{\{x_1,x_2\}}$ be the first hitting time of $\{x_1,x_2\}$. By identifying the two points and giving the edge to $y$ the weight $C(x_1,y)+C(x_2,y)$, we get from \equ{E2.05}
\begin{equation}
\label{E2.06}
\E_y[\tau_{\{x_1,x_2\}}]=2\,\frac{\sum_{e\in E}C(e)}{C(x_1,y)+C(x_2,y)}-1.
\end{equation}
We will need \equ{E2.06} when we compute the expected time that the random walk $X$ on $\CT$ spends in a trap before it makes one step on the ray from $y$ to the left (say $x_1$) or the right (say $x_2$).

\begin{lemma}[Trap of type (a)]
\label{L2.01}
Assume that for a given tree $\CT$, the trap starts one step right of the initial point of the random walk and that the trap has $k\in\N$ edges. Let $T_{k}$ be the time the random walk spends in the trap before it makes the first step on the ray (not counting this first step on the ray). Then
$$\E^\CT[T_k]=\cases{
\beta\frac{\beta^k-1}{\beta-1},&\mfalls \beta\neq1,\\[2mm]
k,&\mfalls \beta=1.
}
$$
\end{lemma}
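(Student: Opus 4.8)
The plan is to read $T_k$ off as an exit time on a finite reversible network, using the conductance formula \eqref{E2.06} of Section~\ref{S2.2}. First I would isolate the finite subgraph $G=(V,E)$ consisting of the dead-end path $y=v_0,v_1,\ldots,v_k$ (the trap, whose deep end $v_k$ is a leaf), together with the two ray-neighbours $x_1,x_2$ of the turning point $y$ --- the backward horizontal edge and the forward rung --- each regarded as a leaf attached to $y$. ``Making the first step on the ray'' is exactly hitting $\{x_1,x_2\}$, so that
\[
\E^\CT[T_k]=\E_y\big[\tau_{\{x_1,x_2\}}\big]-1,
\]
the subtracted $1$ discarding the final step onto the ray, which is not counted. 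Since the trap is finite we have $C(V)<\infty$, so the hitting-time identities of Section~\ref{S2.2} apply verbatim; in particular the whole computation reduces to summing conductances.

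Next I would insert the weights \eqref{E1.09} and evaluate \eqref{E2.06}. Scaling all conductances by their common value at $y$ (which changes neither the walk nor $T_k$), the trap edges become $\beta,\beta^2,\ldots,\beta^k$, with $\sum_{i=1}^k\beta^i=\beta\,\frac{\beta^k-1}{\beta-1}$, while the two ray-edges contribute the denominator. Writing $S=\sum_{i=1}^k\beta^i$, one gets $\sum_{e\in E}C(e)=2+S$ and $C(x_1,y)+C(x_2,y)=2$, so \eqref{E2.06} gives
\[
\E_y\big[\tau_{\{x_1,x_2\}}\big]=2\cdot\frac{2+S}{2}-1=1+S,
\]
whence $\E^\CT[T_k]=S=\beta\,\frac{\beta^k-1}{\beta-1}$ for $\beta\neq1$. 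The case $\beta=1$ follows either by letting $\beta\downarrow1$ (so $S\to k$) or by rerunning the computation with all conductances equal, giving $\E^\CT[T_k]=k$.

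The geometric summation and the bookkeeping of the additive constants are routine. The one step that genuinely needs care --- and the source of the clean prefactor $1$ here, as opposed to the factor $2/(\beta+1)$ that accompanies traps of type (c) in \eqref{E2.02} --- is the exact accounting of the powers of $\beta$ at the turning point: precisely which conductance the forward rung carries relative to the backward horizontal edge, and hence the value of $C(x_1,y)+C(x_2,y)$ against the leading trap weight. I would pin this down directly from \eqref{E1.09} together with the local picture in Figure~\ref{F2.1} before summing, since an off-by-one in the rung's exponent would change the answer by exactly this factor; getting the normalisation $C(x_1,y)=C(x_2,y)$ right is the crux of the argument.
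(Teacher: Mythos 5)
Your proof is correct and takes essentially the same route as the paper's: it applies \eqref{E2.06} to the finite network consisting of the trap plus the two ray-neighbours of the turning point, computes the total conductance $2+\beta+\cdots+\beta^k$ against the denominator $C(x_1,y)+C(x_2,y)=2$, and subtracts the uncounted final step, giving $\beta+\cdots+\beta^k=\beta\frac{\beta^k-1}{\beta-1}$ (with $\beta=1$ by continuity or direct computation). The normalisation you flag as the crux is resolved exactly as you propose and as in Figure~\ref{F2.1}: the backward horizontal edge and the rung at the turning point carry equal conductance, and the trap edges carry $\beta,\ldots,\beta^k$ relative to it, which is precisely the accounting in the paper's own one-line computation.
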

\begin{proof}
We apply \equ{E2.06} with $y$ the point on the ray where the trap splits off, $x_1$ and $x_2$ the two neighbouring points of $y$ on the ray. The graph $V$ consists of the points in the trap, $y$, $x_1$ and $x_2$. The sum of edge weights is
$$\sum_{e\in E}C(e)\;=\;2+\beta+\ldots+\beta^k.$$
Hence
$$\E^\CT[T_k]\;=2\;\frac{2+\beta+\ldots+\beta^k}{2}-2\;=\;\beta+\ldots+\beta^k.$$
This, however, is the assertion.
\end{proof}

\begin{lemma}[Trap of type (b)]
\label{L2.02}
Assume that for a given $\CT$, the trap starts one step left of the initial point of the random walk and that the trap has $l\in\N$ edges. Let $T_{l}$ be the time the random walk spends in the trap before it makes the first step on the ray (not counting this first step on the ray). Then
$$\E^\CT[T_l]=\cases{
\frac{2\beta}{\beta+1}\frac{1-\beta^{-l}}{\beta-1},&\mfalls \beta\neq1,\\[2mm]
l,&\mfalls \beta=1.
}
$$
\end{lemma}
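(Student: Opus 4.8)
The plan is to reproduce the argument of Lemma~\ref{L2.01}, applying the hitting-time identity \equ{E2.06} to the finite graph made up of the turning point, its two neighbours on the ray, and the $l$ edges of the dead end. First I would fix the local picture of Figure~\ref{F2.2}: let $y$ be the turning point at which the ray, having just stepped down (or up), turns back to the right; let $x_1$ be the ray-neighbour that $y$ was reached from (through the vertical edge); and let $x_2$ be the next ray-neighbour to the right. In the graph spanned by $y$, $x_1$, $x_2$ and the $l$ trap vertices, both $x_1$ and $x_2$ are leaves whose only neighbour is $y$, so identifying them into a single absorbing point puts us exactly in the situation of \equ{E2.06}, with $\E^\CT[T_l]=\E_y[\tau_{\{x_1,x_2\}}]-1$ since $T_l$ does not count the final step onto the ray.

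Next I would read the edge weights off Figure~\ref{F2.2}. The decisive difference from type~(a) is that here the two exit edges are asymmetric: the vertical edge has weight $C(x_1,y)=1$ while the forward horizontal edge has weight $C(x_2,y)=\beta$, so $C(x_1,y)+C(x_2,y)=1+\beta$. Because the trap now runs to the \emph{left}, its edges carry the decreasing geometric weights $1,\beta^{-1},\dots,\beta^{-(l-1)}$, summing to $\frac{1-\beta^{-l}}{1-\beta^{-1}}$; hence $\sum_{e}C(e)=1+\beta+\frac{1-\beta^{-l}}{1-\beta^{-1}}$.

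Finally I would assemble the answer exactly as in Lemma~\ref{L2.01}, using $\E^\CT[T_l]=2\,\frac{\sum_{e}C(e)}{C(x_1,y)+C(x_2,y)}-2$ (the hitting formula's $-1$ together with one more $-1$ for the uncounted step onto the ray). The free constants cancel against $\frac{2(1+\beta)}{1+\beta}=2$, leaving $\frac{2}{1+\beta}\cdot\frac{1-\beta^{-l}}{1-\beta^{-1}}$, which I would rewrite as $\frac{2\beta}{\beta+1}\cdot\frac{1-\beta^{-l}}{\beta-1}$; the case $\beta=1$ follows either by letting $\beta\to1$ or directly, since then every weight equals $1$, the trap is a path of length $l$, and the same computation gives $l$. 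I expect no analytic difficulty here: the only things that need care are the bookkeeping of the two subtracted units and the asymmetric exit weights $1$ and $\beta$, since it is precisely this asymmetry—absent in type~(a), where both exit edges have weight $1$—that produces the prefactor $\frac{2\beta}{\beta+1}$.
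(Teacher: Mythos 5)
Your proposal is correct and takes essentially the same route as the paper's proof: apply \equ{E2.06} at the turning point with the asymmetric exit weights $1$ (vertical edge) and $\beta$ (forward horizontal edge), add the geometrically decreasing trap weights $1+\beta^{-1}+\dots+\beta^{-(l-1)}$, and subtract the two units (one from the hitting-time identity, one for the uncounted step onto the ray). The paper's own proof is simply a terser version of this identical computation, so there is nothing to add.
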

\begin{proof}
We argue as in the proof of Lemma~\ref{L2.01}. Note that here the two edges on the ray have weights $1$ and $\beta$, respectively. Hence
$$\E^\CT[T_l]\;=2\;\frac{1+\beta+\beta^{0}+\beta^{-1}+\ldots+\beta^{1-l}}{\beta+1}-2
\;=\;2\,\frac{1+\ldots+\beta^{1-l}}{\beta+1}.$$
\end{proof}
\begin{lemma}[Trap of type (c)]
\label{L2.03}
Assume that for a given $\CT$, the trap starts one step above (or below) the initial point of the random walk and then splits into $l$ edges to the left and $k$ edges to the right. Let $T_{k,l}$ be the time the random walk spends in the trap before it makes the first step on the ray (not counting this first step on the ray).
Then
$$\begin{aligned}
\E^\CT[T_{k,l}]=\cases{
\frac{2}{1+\beta}\left(1+\frac{\beta}{\beta-1}\big(\beta^k-\beta^{-l}\big)\right),&\mfalls\beta\neq1,\\
k+l+1,&\mfalls\beta=1.}
\end{aligned}
$$
\end{lemma}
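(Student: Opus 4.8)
The plan is to follow the proofs of Lemma~\ref{L2.01} and Lemma~\ref{L2.02} verbatim in structure: reduce $T_{k,l}$ to a hitting time in a finite graph and apply \eqref{E2.06}. Let $y$ denote the vertex on the ray from which the type-(c) trap branches off, and let $x_1,x_2$ be its left and right neighbours on the ray. Cutting the ray at $x_1$ and $x_2$ makes them leaves attached only to $y$, and the walk started at $y$ makes its first step on the ray precisely when it hits $\{x_1,x_2\}$. Because $T_{k,l}$ omits this last step, I would use $\E^\CT[T_{k,l}]=\E_y[\tau_{\{x_1,x_2\}}]-1$ together with \eqref{E2.06} to get
\begin{equation*}
\E^\CT[T_{k,l}]=2\,\frac{\sum_{e\in E}C(e)}{C(x_1,y)+C(x_2,y)}-2,
\end{equation*}
where $(V,E)$ is the finite trap graph.

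The substance of the proof is the accounting of conductances. The graph $(V,E)$ consists of $y$, the two ray-leaves $x_1,x_2$, the rung descending from $y$ to a vertex $b$, and the two horizontal dead ends issuing from $b$, namely $k$ edges to the right and $l$ edges to the left. Reading the conductances from Figure~\ref{F2.3} and rescaling all of them so that the left ray-edge has weight $1$ (permissible, since \eqref{E2.06} depends only on ratios of conductance sums), I obtain $C(x_1,y)=1$, $C(x_2,y)=\beta$, a rung of weight $1$, right-edges $\beta,\beta^2,\dots,\beta^k$, and left-edges $1,\beta^{-1},\dots,\beta^{-(l-1)}$. Hence $C(x_1,y)+C(x_2,y)=1+\beta$ and
\begin{equation*}
\sum_{e\in E}C(e)=(1+\beta)+1+\sum_{j=1}^{k}\beta^{j}+\sum_{j=0}^{l-1}\beta^{-j}.
\end{equation*}

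For $\beta\neq1$ I would sum the two geometric series, using $\sum_{j=1}^{k}\beta^{j}=\frac{\beta^{k+1}-\beta}{\beta-1}$ and $\sum_{j=0}^{l-1}\beta^{-j}=\frac{\beta-\beta^{1-l}}{\beta-1}$, so that $\sum_{e\in E}C(e)=2+\beta+\frac{\beta^{k+1}-\beta^{1-l}}{\beta-1}$. Substituting this and $C(x_1,y)+C(x_2,y)=1+\beta$ into the first display and simplifying — the part independent of $k$ and $l$, namely $\frac{2(2+\beta)}{1+\beta}-2$, collapses to $\frac{2}{1+\beta}$ — produces exactly
\begin{equation*}
\E^\CT[T_{k,l}]=\frac{2}{1+\beta}\left(1+\frac{\beta}{\beta-1}\big(\beta^{k}-\beta^{-l}\big)\right).
\end{equation*}
For $\beta=1$ every edge has weight $1$, so $\sum_{e\in E}C(e)$ equals the number of edges $k+l+3$ while $C(x_1,y)+C(x_2,y)=2$, giving $\E^\CT[T_{k,l}]=(k+l+3)-2=k+l+1$; this is also the limit $\beta\downarrow1$ of the formula above.

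I expect the only real difficulty to be this conductance bookkeeping rather than any conceptual step. One must place the three families of edges correctly relative to the splitting column, track the powers of $\beta$ running from $-(l-1)$ up to $k$, and — the one point that is easy to get wrong — assign the rung the weight of the incoming (left) ray-edge, as in Figure~\ref{F2.3}. Once $\sum_{e\in E}C(e)$ is correct, the remainder is the same routine geometric-series simplification already performed in Lemma~\ref{L2.01} and Lemma~\ref{L2.02}.
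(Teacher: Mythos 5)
Your proposal is correct and is essentially the paper's own proof: the paper likewise applies \eqref{E2.06} to the finite trap graph, whose conductance sum it writes as $1+\beta+1+\beta^{1-l}+\cdots+\beta^{k}$ (ray edges $1$ and $\beta$, rung $1$, dead-end edges $\beta^{1-l},\dots,\beta^{k}$) --- exactly your accounting --- followed by the same geometric-series simplification. In particular, your choice of rung weight (that of the incoming ray edge, as in Figure~\ref{F2.3}) agrees with the convention the paper's proofs actually use, and your $\beta=1$ case matches as well.
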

\begin{proof}
We argue as in the proof of Lemma~\ref{L2.02} to get
$$\E^\CT[T_{k,l}]\;=2\;\frac{1+\beta+1+\beta^{1-l}+\ldots+\beta^{k}}{\beta+1}-2
\;=\;2\,\frac{1+\beta^{1-l}+\ldots+\beta^{k}}{\beta+1}.$$
\end{proof}

\subsection{Proof of Theorem~\ref{T1}}
\label{S2.3}
This conductance approach is perfectly suited to study random walk on spanning trees. In fact, the speed of random walk on the random spanning tree can be derived from the average time it takes to make a step to the right on the infinite ray. From the considerations of the previous section it is clear that we need to compute the sum of the edge weights of all edges in $\CT$ that are left to walker (i. e., edges that can be reached without making a step to the right on the ray). The random spanning tree is made of i.i.d.{} building blocks that consist of the subgraphs between two missing horizontal edges. So we need the average weights of i.i.d.{} building blocks plus the edge weights in the block the walker is currently in. For this block, we need the exact shape of the block and the exact position of the walker before we take averages.

 Recall that $\CT$ is the random spanning tree with enumeration $\phi(n)$, $n\in\Z$ of its ray. Assume that we are given conductances $C(h_{i,k})=C(z_k)=\beta^{k}$ for all $h_{i,k},z_k\in\CT$. Let $X$ be the random walk on $\CT$ with these conductances. Let $\tau_0=0$ and for $k\in\N$, let
$$\tau_k:=\inf\big\{n\geq0:\phi^{(2)}(Y_n)=k\big\}.$$
Denote by $\E^\CT_0$ the quenched expectation for the random walk $X$ started at $\phi(0)$.
Since $\CT$ is stationary and ergodic, $\phi^{(2)}(X)$ has a deterministic asymptotic speed $v=v(\beta)$ given by
$$v=\frac{1}{\EE\E^\CT_0[\tau_1]}.$$

We will later condition on the position of the first horizontal edge missing to the left, count the edge weights between it and the origin by hand and average over the edge weights to the left of it. By translation invariance, it is enough to condition on
$$B:=\{H_0=0\}=\Big\{\#(\CT\cap\{h_{0,0},h_{1,0}\})=1\Big\}.$$
Denote by $\CT^-\subset\CT$ the set of edges in $\CT$ with at least one endpoint strictly to the left of $0$:
$$\CT^-=\CT\cap\Big\{h_{i,k},\,z_{k-1}:i=0,1,\,k\leq0\Big\}.$$ We now compute
\begin{equation}
\label{E2.07}
\begin{aligned}
C:&=\EE\left[\sum_{e\in\CT^-}C(e)\ggiven B\right]\\
&=\sum_{i=0}^1\sum_{k=-\infty}^0\PP\big[h_{i,k}\in\CT\Given B\big]\,\beta^k
+\sum_{k=-\infty}^{-1}\P\big[z_k\in\CT\Given B\big]\,\beta^k\\
&=
2\sum_{k=0}^\infty\beta^{-k}-\sum_{n=1}^\infty
\EE\left[\beta^{-(F_{-1}+F_{-1}'+1)+\ldots+(F_{-n}+F_{-n}'+1)}\right]- 1\\
&\qquad+\EE\left[\beta^{-(F_{-1}'+1)}+\beta^{-(F_{-1}'+F_{-1}+F_{-2}'+2)}+\ldots\right]\\
&=\frac{2\beta}{\beta-1}-\sum_{n=0}^\infty(s_-^2/\beta)^n+\frac{s_-}{\beta}\sum_{n=0}^\infty(s_-^2/\beta)^n\\
&=\frac{2\beta}{\beta-1}-\frac{1-s_-/\beta}{1-s_-^2/\beta}.
\end{aligned}
\end{equation}
Recall that
\begin{equation}\label{E2.08}
s_-=\EE\big[\beta^{-F_1}\big]=\sum_{k=0}^\infty(1-\alpha)\alpha^k\beta^{-k}=\frac{(1-\alpha)\beta}{\beta-\alpha}
\end{equation}
and
\begin{equation}\label{E2.09}
s_+=\EE\big[\beta^{+F_1}\big]=\sum_{k=0}^\infty(1-\alpha)\alpha^k\beta^{k}
=\cases{\DS\frac{1-\alpha}{1-\alpha\beta},&\mfalls \beta<\frac{1}{\alpha},\\[3mm]\infty,&\msonst.}
\end{equation}
Summing up, we have
\begin{equation}
\label{E2.10}
C=\frac{2\beta}{\beta-1}-\frac{1-s_-/\beta}{1-s_-^2/\beta}=\frac{2\beta}{\beta-1}-\frac{\beta-\alpha}{\beta-\alpha^2}.
\end{equation}
Let us now consider the details of the spanning around the position of the walker. That is, we consider the part of $\CT$ between two missing horizontal edges in which the walker is. Since we defined the position of the walker to be $0$, this is the part of $\CT$ between $H_0$ and $H_1$ (see Figure \ref{F1.2}). Recall that $F'_0$ is the distance of the rung to $H_0$ and $F_0$ is the distance of the rung to $H_1$. Furthermore, $V_0$ is the position of the rung. Note that $|W_1-W_0|=1$ if the missing horizontal edges are in alternating vertical positions and $W_1-W_0=0$ otherwise. We first condition on these random variables and compute the edge weights. Later we average over $F'_0$, $F_0$ and $|W_1-W_0|$.

Define the events
\begin{equation}
\label{E2.11}
\begin{aligned}
A_{a,b,k,\sigma}:&=\big\{F'_0=a,\,F_0=b,\,V_0=-k,\,|W_1-W_0|=\sigma\big\}\\
\end{aligned}
\end{equation}
for $a,b\in\N_0,\,k=-a,\ldots,b,\,\sigma=0,1$. For each of these events, we compute the conditional expectation $\E_0[\tau_1\given A_{a,b,k,\sigma}]$ and then sum over all possibilities.

Recall that $\phi(0)=(\phi^{(1)}(0),\phi^{(2)}(0))\in\Ray\cap\{(1,0),(0,0)\}$ is the position of the walker at time $0$. Note that given $A_{a,b,k,\sigma}$, we have $\phi(1)=(\phi^{(1)}(0),1))$ if $k\neq 0$ or $\sigma=0$ and $\phi(1)=(1-\phi^{(1)}(0),0)$ if $k=0$ and $\sigma=1$. In the latter case, the walker has to pass $z_0$ before it makes a jump to the right and accomplishes $\tau_1$.
In order to compute the expected waiting time before the walker makes a step on the ray, we use \equ{E2.06} and compute the edge weights $C_{\CT,-}$ ``to the left'' of the walker. More precisely, let $C_{\CT,-}$ denote the sum of the edge weights of all edges that can be connected to $\phi(0)$ without touching $\phi(1)$ (the next position right on the ray). In the case $k=0$, this includes the edges in the trap starting at $\phi(0)$.

\textbf{Case 1.} $k<0$.\\
Here we compute
\begin{equation}
\label{E2.12}
\begin{aligned}
\EE\big[C_{\tau,-}\given A_{a,b,k,\sigma}\big]
&=C\beta^{-a-k}+1+\beta^{-1}+\ldots+\beta^{1-a-k}\\
&=\left(C-\frac{\beta}{\beta-1}\right) \beta^{-a-k}+\frac{\beta}{\beta-1}.
\end{aligned}
\end{equation}
Denote by $\E_0$ the annealed expectation for the random walk started in $\phi(0)$. Since $C(h_{i,1})=\beta$, $i=1,2$, we conclude due to \equ{E2.06}
\begin{equation}
\label{E2.13}
\begin{aligned}
\E_{0}\big[\tau_1\Given A_{a,b,k,\sigma}\big]
&=\beta^{-1}\,2\,\E[C_{\tau,-}\given A_{a,b,k,\sigma}]+1\\
&=\frac{\beta+1}{\beta-1}+2\left(\frac{C}{\beta}-\frac{1}{\beta-1}\right)\beta^{-a-k}\\
&=:\frac{\beta+1}{\beta-1}+f_1(a,k).
\end{aligned}
\end{equation}

\textbf{Case 2.} $k>0$ or $k=0$ and $\sigma=0$.\\
This is quite similar to Case 1, but since $z_{-k}\in\CT$, we also have the additional edge weights
$$\beta^{-k}\big(1+\beta^{-a}+\beta^{-a+1}+\ldots+\beta^{b}\big).$$
Hence
\begin{equation}
\label{E2.14}
\begin{aligned}
\E_{0}\big[\tau_1\Given A_{a,b,k,\sigma}\big]
&=\frac{\beta+1}{\beta-1}+f_1(a,k)+2\beta^{-k}\left(\frac{1}{\beta}+\frac{1}{\beta-1}\big(\beta^b-\beta^{-a}\big)\right)\\
&=:\frac{\beta+1}{\beta-1}+f_1(a,k)+f_2(a,b,k).
\end{aligned}
\end{equation}

\textbf{Case 3.} $k=0$ and $\sigma=1$.\\
This works as in Case 2, but in addition, the walker has to pass the edge $z_0$.  Here we only compute the additional time that it takes for passing this edge. Since the edge weights
$$\beta+\ldots+\beta^{b}=\beta\frac{\beta^b-1}{\beta-1}$$
contribute to $C_{\CT,-}$, and since $C(z_0)=1$, we get
\begin{equation}
\label{E2.15}
\begin{aligned}
\E_{0}\big[\tau_1\given A_{a,b,k,\sigma}\big]
&=\frac{\beta+1}{\beta-1}+f_1(a,b)+f_2(a,b)+f_3(a,b)
\end{aligned}
\end{equation}
with
\begin{equation}
\label{E2.16}
\begin{aligned}
f_3(a,b)
&=2\left(C-\frac{\beta}{\beta-1}\right)\beta^{-a}+2\,\frac{\beta}{\beta-1}+1+2\beta\frac{\beta^b-1}{\beta-1}\\
&=1+2\left(C-\frac{\beta}{\beta-1}\right)\beta^{-a}+\frac{2\beta}{\beta-1}\beta^b.
\end{aligned}
\end{equation}

Now we sum up over the events $A_{a,b,k,\sigma}$. Recall the distribution of $G_0=F_0+F'_0+1$ from \equ{E1.02} and recall that $-H_0$ and $F'_0$ are independent are uniformly distributed on $\{0,\ldots,G_0-1\}$ given $G_0$. Hence
\begin{equation}
\label{E2.17}
\begin{aligned}
\PP\big[A_{a,b,k,\sigma}\big]&=\frac{1}{2}\PP\big[F'_0=a,\,F_0=b,\,-H_0=k+a\big]\\
&=\frac{1}{2}\frac{\PP[G_0=a+b+1]}{(a+b+1)^2}\\
&=\frac12\frac{1-\alpha}{1+\alpha}(1-\alpha)^2\alpha^{a+b}.
\end{aligned}
\end{equation}

Hence, we get
\begin{equation}
\label{E2.18}
\begin{aligned}
\sum_{a,b=0}^\infty&\sum_{k=-a}^b\sum_{\sigma=0}^1\PP\big[A_{a,b,k,\sigma}\big]\,f_1(a,k)\\
&=\frac{1-\alpha}{1+\alpha}\sum_{a,b=0}^\infty
\left(\frac{C}{\beta}-\frac{1}{\beta-1}\right)
(1-\alpha)^2\,\alpha^{a+b}\sum_{k=-a}^b\beta^{-a-k}\\
&=\frac{1-\alpha}{1+\alpha}\left(\frac{C}{\beta}
-\frac{1}{\beta-1}\right)\sum_{a,b=0}^\infty(1-\alpha)^2\alpha^{a+b}\ \frac{\beta-\beta^{-a-b}}{\beta-1}\\
&=\frac{1-\alpha}{1+\alpha}\,2\,\left(C-\frac{\beta}{\beta-1}\right)\frac{1-s_-^2/\beta}{\beta-1}.
\end{aligned}
\end{equation}

Furthermore,
\begin{equation}
\label{E2.19}
\begin{aligned}
\sum_{a,b=0}^\infty&\sum_{k=0}^b\sum_{\sigma=0}^1\PP[A_{a,b,k,\sigma}]f_2(a,k)\\
&=\frac{1-\alpha}{1+\alpha}\sum_{a,b=0}^\infty(1-\alpha)^2\,\alpha^{a+b}
\cdot2\cdot\left(\frac{1}{\beta}+\frac{1}{\beta-1}\big(\beta^b-\beta^{-a}\big)\right)\sum_{k=0}^b\beta^{-k}\\
&=\frac{1-\alpha}{1+\alpha}\sum_{a,b=0}^\infty(1-\alpha)^2\,\alpha^{a+b}
\cdot2\cdot\left(\frac{1}{\beta}+\frac{1}{\beta-1}\big(\beta^b-\beta^{-a}\big)\right)
\frac{\beta-\beta^{-b}}{\beta-1}\\
&=\frac{1-\alpha}{1+\alpha}\frac{2}{(\beta-1)^2}\Big[\beta s_++(\beta^{-1}-\beta-1)s_-+s_-^2+\beta-2\Big].
\end{aligned}
\end{equation}
Summing the terms for $f_1$ and $f_2$, we get the expected value for $\tau_1$ for the case where the ray stays either up all time or down all time. That is, by an explicit computation, we get
\begin{equation}
\label{E2.20}
\begin{aligned}
\E_0[\tau_1\Given W_n=1&\mbs{for all}n\in\Z]\\
&=\frac{\beta+1}{\beta-1}+\sum_{a,b,k,\sigma}\PP[A_{a,b,k,\sigma}]\,\big(f_1(a,k)+f_2(a,b,k)\big)\\
&=\frac{\beta+1}{\beta-1}+\frac{1-\alpha}{1+\alpha}\,2\,\left(C-\frac{\beta}{\beta-1}\right)
\frac{1-s_-^2/\beta}{\beta-1}\\
&\quad+\frac{1-\alpha}{1+\alpha}\frac{2}{(\beta-1)^2}\Big[\beta s_++(\beta^{-1}-\beta-1)s_-+s_-^2+\beta-2\Big]\\
&=\frac{1}{v^u}
\end{aligned}
\end{equation}
with $v^u$ from \equ{E2.03}.
The last equality is a tedious calculation that is omitted here. However, it is clear that $1/v^u$ must be the average holding time since the event we condition on is $\{\CT=\CT^u\}$.

Now we come to the last term that describes the slow down of the walker due to the need to pass through $z_{V_0}$ if $W_{-1}\neq W_0$.
\begin{equation}
\label{E2.21}
\begin{aligned}
\sum_{a,b}&\big(\E_0[\tau_1\Given A_{a,b,0,1}]-\E_0[\tau_1\Given A_{a,b,0,0}]\big)\,\PP[A_{a,b,0,1}]\\
&=\frac{1-\alpha}{1+\alpha}\cdot\frac12\sum_{a,b=0}^\infty(1-\alpha)^2\alpha^{a+b}f_3(a,b)\\
&=\frac12\frac{1-\alpha}{1+\alpha}\left[1+2\left(C-\frac{\beta}{\beta-1}\right)s_-+\frac{2\beta}{\beta-1}s_+\right].
\end{aligned}
\end{equation}
Concluding, we get the explicit formula (recall $C$ and $s_+$ from \equ{E2.07}, \equ{E1.11} and \equ{E1.10})
\begin{equation}
\label{E2.22}
\E_0[\tau_1]=\frac{1}{v^{u}}+\frac12\frac{1-\alpha}{1+\alpha}\left[1+2\left(C-\frac{\beta}{\beta-1}\right)s_-+\frac{2\beta}{\beta-1}s_+\right]
\end{equation}
and the formula for the speed
\begin{equation}
\label{E2.23}
v=\frac{1}{\E_0[\tau_1]}.
\end{equation}
Plugging in the expressions for $\E_0[\tau_1]$ and $v^{u}$ we get \equ{E1.12} and the proof of Theorem~\ref{T1} is complete. \hfill\qed

\Section{Ballistic central limit theorem, proof of Theorem~\ref{T2}}
\label{S3}
We follow the strategy of proof of Theorem 2 in \cite{Bowditch2019} by introducing regeneration times $(\sigma^X_n)_{n\in\N}$ (in \equ{E3.09} below) and showing that $\sigma^X_2-\sigma^X_1$ has a second moment.

Recall the definition of the random variables $H_n$, $n\in\Z$, from Section~\ref{S1.2}. Loosely speaking, $H_n$ is the right vertex of the $n$th horizontal bond right of the origin that has no matching horizontal bond. That is, either $\{(0,H_n-1),(0,H_n)\}$ is in the tree but $\{(1,H_n-1),(1,H_n)\}$ is not or vice versa. Let $i_n\in\{0,1\}$ be such that $e_n:=\{(i_n,H_n-1),(i_n,H_n)\}\in\CT$. Denote by $\CT_n$ the set of edges in $\CT$ between $H_n$ and $H_{n+1}$ shifted by $H_n$ and complemented by the information if the unique vertical edge between $H_{n-1}$ and $H_n$ is in this set is in the ray or not. More formally, for an edge $e=\{(j_1,h_1),(j_2,h_2)\}$, we define $e+(0,k)=\{(j_1,h_1+k),(j_2,h_2+k)\}$. Then
$$\CT_n:=\Big(\big\{e:\,e\subset\{0,1\}\times\{0,H_{n+1}-H_n-1\}:\,e+(0,H_n)\in\CT\big\},\1_{\{i_n\neq i_{n+1}\}}\Big).$$
Note that $(\CT_n)_{n=1,2,\ldots}$ is i.i.d.

Let $n_1<n_2<\ldots$ be the times where $X$ is on the ray:
$$\{n_1,n_2,\ldots\}=\{n:\,X_n\in \Ray\}.$$
Let $Y_k=X_{n_k}$, $k\in\N$. Then $Y$ is a random walk on the infinite ray of $\CT$.

Assume that $Y$ is started at $\phi(0)$. Since $Y$ is transient to the right, every point $\phi(n)$, $n>0$, is visited at least once. Recall that $C(\phi(n),\phi(n+1))$ is the conductance between $\phi(n)$ and $\phi(n+1)$. We write
\begin{equation}
\label{E3.01}
\mathcal{R}^\CT_{\mathrm{eff}}(n\leftrightarrow\infty)=\sum_{m=n}^\infty\frac{1}{C(\phi(m),\phi(m+1))}
\end{equation}
for the effective resistance between $\phi(n)$ and $+\infty$. By \cite[Theorem 19.25]{Klenke2020e}, the probability that $Y$ never returns to $\phi(n)$ is
\begin{equation}
\label{E3.02}
p^{\CT}(\phi(n))=\Big[\big(C\big(\phi(n-1),\phi(n)\big)+C\big(\phi(n),\phi(n+1)\big)\big)\,
\mathcal{R}^\CT_{\mathrm{eff}}(n\leftrightarrow\infty)\Big]^{-1}.
\end{equation}

Denote by $\ell^Y(\phi(n))=\sum_{k=0}^\infty \1_{\{\phi(n)\}}(Y_k)$ the local time of $Y$ at $\phi(n)$. The above considerations show that
\begin{lemma}
\label{L3.01}
The local time $\ell^Y(\phi(n))$ of $Y$ at $\phi(n)$, $n>0$, has distribution (given $\CT$)
\begin{equation}
\label{E3.03}
\delta_1\ast \gamma_{p^{\CT}(\phi(n))}
\end{equation}
with $p^{\CT}(\phi(n))$ given by \equ{E3.02}.
For $p^{\CT}(\phi(n))$ we have the bounds
\begin{equation}
\label{E3.04}
 \frac12\frac{\beta-1}{\beta+1}\,\leq\, p^{\CT}(\phi(n))\,\leq\, \frac{\beta-1}{\beta+1}.
\end{equation}
\end{lemma}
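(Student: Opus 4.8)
The plan is to prove the two claims separately: the distributional identity for $\ell^Y(\phi(n))$, and then the quantitative bounds \equ{E3.04}. For the distribution, since $Y$ is transient to the right and starts at $\phi(0)$ with $n>0$, the vertex $\phi(n)$ is visited at least once $\P^\CT$-almost surely, so $\ell^Y(\phi(n))\geq1$. I would then apply the strong Markov property at the successive visits to $\phi(n)$: each time the walk sits at $\phi(n)$ it escapes to $+\infty$ without returning with probability exactly $p^\CT(\phi(n))$ as in \equ{E3.02} (this is \cite[Theorem 19.25]{Klenke2020e}), independently of the past. Hence the number of returns after the first visit is $\gamma_{p^\CT(\phi(n))}$-distributed and the total local time is $1$ plus this geometric count, i.e.\ $\delta_1\ast\gamma_{p^\CT(\phi(n))}$.

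For the bounds I would read off the conductances along $\Ray$ to the right of $\phi(n)$. Put $x=\phi^{(2)}(n)$. The ray is horizontally monotone and, at each horizontal position $k$, crosses exactly one horizontal edge of conductance $\beta^{k}$, together with one extra rung precisely at those positions where it changes rails. Consequently the edge leaving $\phi(n)$ to the right always has $C(\phi(n),\phi(n+1))=\beta^{x}$, while the incoming edge has $C(\phi(n-1),\phi(n))=\beta^{x-1}$ if it is horizontal and $=\beta^{x}$ if it is a crossed rung. Grouping the resistances by horizontal position gives
\[
\mathcal{R}^\CT_{\mathrm{eff}}(n\leftrightarrow\infty)=\sum_{k\geq x}n_k\,\beta^{-k},\qquad n_k\in\{1,2\},
\]
with $n_k=2$ exactly when a rung is crossed at position $k$. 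Since every position contributes its horizontal edge ($n_k\geq1$), this yields $\frac{\beta^{1-x}}{\beta-1}\leq \mathcal{R}^\CT_{\mathrm{eff}}(n\leftrightarrow\infty)\leq\frac{2\beta^{1-x}}{\beta-1}$.

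The upper bound on $p^\CT(\phi(n))$ is then immediate: one has $C(\phi(n-1),\phi(n))+C(\phi(n),\phi(n+1))\geq\beta^{x-1}(1+\beta)$, and multiplying by $\mathcal{R}^\CT_{\mathrm{eff}}(n\leftrightarrow\infty)\geq\frac{\beta^{1-x}}{\beta-1}$ gives a product at least $\frac{\beta+1}{\beta-1}$, whence $p^\CT(\phi(n))\leq\frac{\beta-1}{\beta+1}$.

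The lower bound is the delicate part, and I expect it to be the main obstacle. Bounding the conductance sum and the resistance by their worst cases \emph{independently} is too lossy: it only produces $p^\CT(\phi(n))\geq\frac{\beta-1}{4\beta}$, which is strictly weaker than the claim for $\beta>1$. The point is that the two factors are negatively correlated through $n_x$: the incoming edge can be a rung — forcing the large value $C(\phi(n-1),\phi(n))=\beta^{x}$ — only if no rung is crossed at position $x$ on the right, so that then $n_x=1$; conversely, whenever $n_x=2$ the incoming edge must be horizontal and the conductance sum is only $\beta^{x-1}(1+\beta)$. This incompatibility rests on the elementary structural fact that two rungs are never crossed consecutively along $\Ray$. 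Distinguishing the three resulting configurations and using $\sum_{k>x}n_k\beta^{-k}\leq 2\frac{\beta^{-x}}{\beta-1}$, one verifies in each case that $\bigl(C(\phi(n-1),\phi(n))+C(\phi(n),\phi(n+1))\bigr)\,\mathcal{R}^\CT_{\mathrm{eff}}(n\leftrightarrow\infty)\leq\frac{2(\beta+1)}{\beta-1}$, which gives $p^\CT(\phi(n))\geq\frac12\frac{\beta-1}{\beta+1}$. Thus the crux is to set up the conductance bookkeeping carefully enough to exploit this structural correlation, rather than to carry out any single hard estimate.
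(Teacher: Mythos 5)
Your proposal is correct and follows essentially the same route as the paper: the escape-probability formula \equ{E3.02} plus conductance/resistance bookkeeping along the ray, with a case split according to whether the edge entering $\phi(n)$ is a rung or a horizontal edge. Your explicit three-case product bound (using that an incoming rung forces $n_x=1$) is just a slightly more systematic rendering of the paper's computation of the worst-case configurations (i) and (ii), so the two arguments agree in substance.
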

\begin{proof}
We only have to show \equ{E3.04}. Note that the effective resistance to $\infty$ is minimal if the ray is straight right of $\phi(n)$, that is, $\phi(n+k)=\phi(n)+(0,k)$ for $k=0,1,2,\ldots$. In this case, $C\big(\phi(n+k),\phi(n+k+1)\big)=\beta^k C\big(\phi(n),\phi(n+1)\big)$ and hence
$$\mathcal{R}^\CT_{\mathrm{eff}}\big(\phi(n)\leftrightarrow\infty\big)
\,=\,\frac{\beta}{\beta-1}\frac{1}{C\big(\phi(n),\phi(n+1)\big)}.$$
Since $C\big(\phi(n-1),\phi(n)\big)\geq C\big(\phi(n),\phi(n+1)\big)/\beta$, we get the upper bound in \equ{E3.04}, i.e.
$$p^{\CT}(\phi(n))\,\leq\, \frac{\beta-1}{\beta+1}.$$
The effective resistance is maximal if all the vertical edges are in the ray (to the right of $\phi(n)$, at least). Since each vertical edge has the same conductance as its preceding horizontal edge, the effective resistance essentially doubles. We distinguish the cases
\begin{itemize}
\item[(i)] where $\phi(n-1)$ and $\phi(n)$ are connected by a vertical edge and
\item[(ii)] where they are connected by a horizontal edge.
\end{itemize}
 In case (i), the edge between $\phi(n)$ and $\phi(n+1)$ is horizontal, the edge between $\phi(n+1)$ and $\phi(n+2)$ is vertical and so on. Summing up, we get
$$
\mathcal{R}^\CT_{\mathrm{eff}}\big(\phi(n)\leftrightarrow\infty\big)
\,=\,\frac{2\beta}{\beta-1}\frac{1}{C\big(\phi(n),\phi(n+1)\big)}$$
and $C\big(\phi(n-1),\phi(n)\big)=C\big(\phi(n),\phi(n+1)\big)/\beta$. That is
$$p^{\CT}(\phi(n))\,=\, \frac12\frac{\beta-1}{\beta+1}.$$
In case (ii), we have
$$\mathcal{R}^\CT_{\mathrm{eff}}\big(\phi(n)\leftrightarrow\infty\big)
=\frac{1}{C\big(\phi(n),\phi(n+1)\big)}+\frac{2}{\beta}\frac{\beta}{\beta-1}\frac{1}{C(n)}
=\frac{\beta+1}{\beta-1}\frac{1}{C(\phi(n),\phi(n+1))}$$
and $C\big(\phi(n-1),\phi(n)\big)=C\big(\phi(n),\phi(n+1)\big)$. That is, we get again
$$p^{\CT}(\phi(n))\,=\, \frac{1}{2C(n,n+1)\mathcal{R}^\CT_{\mathrm{eff}}\big(\phi(n)\leftrightarrow\infty\big)}
\,=\,\frac12\frac{\beta-1}{\beta+1}.$$
Summarizing, this gives the lower bound in \equ{E3.04}.
\end{proof}

As $Y$ has positive speed to the right, it passes each $e_n$, $n\in\N$ at least once. We say that $n$ is a regeneration point if $Y$ passes $e_n$ exactly once. Note that for given $\CT$ and $n$, the probability that $n$ is regeneration point is
\begin{equation}
\label{E3.05}
\begin{aligned}
\P^\CT_{(i_n,H_n)}\big[Y_k\neq (i_n,H_n-1)\mbox{ for all }k\geq1\big]&=\frac{\mathcal{R}^\CT_{\mathrm{eff}}\big((i_n, H_n-1)\leftrightarrow (i_n,H_n)\big)}{\mathcal{R}^{\CT}_{\mathrm{eff}}\big((i_n, H_n-1)\leftrightarrow \infty\big)}\\
&=\frac{\beta^{-H_n}}{\mathcal{R}^{\CT}_{\mathrm{eff}}\big((i_n, H_n-1)\leftrightarrow \infty\big)}\\
&\geq \left(2\sum_{k=0}^\infty\beta^{-k}\right)^{-1}=\frac{\beta-1}{2\beta}.
\end{aligned}
\end{equation}
Let $$\sigma^Y_n:=\inf\{k:\,Y_k=(i_n,H_n)\}.$$
For $m>n$, let
\begin{equation}
\label{E3.06}
\begin{aligned}
A_n&:=\big\{n\mbox{ is a regeneration point}\big\}\\
&=\Big\{Y_k\not\in\{0,1\}\times\{H_n-1,H_n-2,\ldots\}\mbox{ for all }k\geq \sigma_n^Y\Big\}
\end{aligned}
\end{equation}
and
\begin{equation}
\label{E3.07}
A_{n,m}:=\Big\{Y_k\not\in\{0,1\}\times\{H_n-1,H_n-2,\ldots\}\mbox{ for all }k\in\{\sigma^Y_n,\ldots,\sigma^Y_m\}\Big\}.
\end{equation}
Clearly, we have $A_{n}\subset A_{n,m}$ for all $m>n$. Now let $n_1,\ldots,n_k\in\N$, $n_1<n_2<\ldots<n_k$. Note that $A_{n_l,n_{l+1}}$, $l=1,\ldots,k-1$, and $A_{n_k}$ are independent events under $\P^\CT_0$. Hence
\begin{equation}
\label{E3.08}
\begin{aligned}
\P^\CT_0\big[n_l&\mbox{ is a regeneration point for all }l=1,\ldots,k\big]\\
&=\P^\CT_0\left[\bigcap_{l=1}^k A_{n_l}\right]\\
&=\P^\CT_0\left[\left(\bigcap_{l=1}^{k-1} A_{n_l,n_{l+1}}\right)\cap A_{n_k}\right]\\
&=\left(\prod_{l=1}^{k-1}\P^\CT_{0}\left[A_{n_l,n_{l+1}}\right]\right)\cdot\P^\CT_{0} [A_{n_k}]\\
&\geq\prod_{l=1}^{k}\P^\CT_{0}\left[A_{n_l}\right]\\
&\geq\left(\frac{\beta-1}{2\beta}\right)^k.
\end{aligned}
\end{equation}
Summing up, the set of regeneration points is minorized by a Bernoulli point process with success probability $(\beta-1)/2\beta$ that is independent of $\CT$. More explicitly, there exist i.i.d.{} Bernoulli random variables $Z_1,Z_2,\ldots$ independent of $\CT$ such that $\P[Z_1=1]=(\beta-1)/2\beta$ and such that $Z_n=1$ implies that $n$ is a regeneration point (but not vice versa).

Now let $\tau_1<\tau_2<\ldots$ denote the sequence of $n$'s such that $Z_n=1$. Note that $(\tau_{k+1}-\tau_{k})_{k\in\N}$ are i.i.d.{} geometric random variables. Let
\begin{equation}
\label{E3.09}
\sigma^X_n:=\inf\big\{k:\,X_k=(i_{\tau_n},H_{\tau_n})\big\}.
\end{equation}
and
\begin{equation}
\label{E3.10}
L_k:=\#\big\{m\in\N_0:\, X_m\in\{0,1\}\times\{H_{k},\ldots,H_{k+1}-1\}\big\}.
\end{equation}
Note that for $n\in\N$,
\begin{equation}
\label{E3.11}
\sigma^X_{n+1}-\sigma^X_n= \sum_{k=\tau_n}^{\tau_{n+1}-1}L_k
\end{equation}

Then $(\sigma^X_{k+1}-\sigma^X_k)_{k\in\N}$ are i.i.d.{} random variables (under the annealed probability measure). Along each elementary block $\CT_n$ the ray passes $(\#\CT_n(1)+1)/2=H_{n+1}-H_n$ horizontal edges and $\CT_n(2)$ vertical edges. Note that
\begin{equation}
\label{E3.12}
\begin{aligned}
\PP\big[\#\CT_n(1)+1= 2k\big]&=\PP[H_{n+1}-H_n=k]\\
&=\gamma_{1-\alpha}\ast\gamma_{1-\alpha}(k-1)\\
&=(1-\alpha)^2\,k\alpha^k.
\end{aligned}
\end{equation}
For each step of $Y_n$, the random walk $X_n$ spends a random amount of time $T_n$ in a trap. For most points of the ray, this random variable is simply $1$. Only at points adjacent to a vertical edge, there can be either one or two real traps with $T_n$ possibly larger than $1$. Let
$$\varrho=-\frac{\log(\alpha)}{\log(\beta)}$$
and recall that $\varrho>2$ since $1<\beta<\beta_c^{(2)}=1/\sqrt{\alpha}$.
By Theorem 1.1 of \cite{GantertKlenke2022}, there exists a constant $K$ such that
$$\P[T_n>t]\leq Kt^{-\varrho}\mfa t>0.$$
Hence, for all $\zeta<\varrho$, we have $\E[T_n^\zeta]<\infty$. In fact, the result of \cite{GantertKlenke2022} gives a precise statement for the tails of the traps of type (a) (see Figure~\ref{F2.1}) only, but clearly, the cases (b) and (c) work similarly.

Let
\begin{equation}
\label{E3.13}
\ell^Y\big((i,m)\big):=\#\big\{n\in\N_0:Y_n=(i,m)\big\}
\end{equation}
be the occupation time of $Y$ in $(i,m)$. Given $\CT$, for each $(i,m)$ on the ray, $\ell^Y\big((i,m)\big)-1$ is a geometric random variable with success probability
$p^{\CT}\big((i,m)\big)$. Here $p^{\CT}\big((i,m)\big)$ is the probability that $Y$ will go to infinity before returning to $(i,m)$. By Lemma~\ref{L3.01}, this probability is bounded below by
$$p^{\CT}\big((i,m)\big)\geq \frac12\frac{\beta-1}{\beta+1}.$$
Let $\gamma_p(k)=p(1-p)^k$ denote the weights of the geometric distribution on $\N_0$ with success probability $p$. Hence
for every $\zeta>0$, there is a constant $K_\zeta<\infty$ such that
\begin{equation}
\label{E3.14}
\E^\CT\Big[\big(\ell(i,m)\big)^\zeta\Big]\leq K_\zeta:=\sum_{k=1}^\infty \gamma_{(\beta-1)/{2(\beta+1)}}(k-1)k^\zeta<\infty.
\end{equation}
Note that $H_{n+1}-H_n$ has moments of all orders, hence there exists $K'_\zeta<\infty$ such that (recall $L_n$ from \equ{E3.10})
\begin{equation}
\label{E3.15}
\begin{aligned}
\EE[L_n^\zeta]\leq K'_\zeta<\infty.
\end{aligned}
\end{equation}
Now fix $\zeta\in(2,\varrho)$ and let $\eta=\frac{\zeta}{\zeta-2}$. Note that $\frac1{\zeta/2}+\frac1\eta=1$. Recall that $\tau_{n+1}-\tau_n$ is geometrically distributed. We use Hölder's inequality to infer
\begin{equation}
\label{E3.16}
\begin{aligned}
\E\Big[\big(\sigma^X_{2}&-\sigma^X_2\big)^2\Big]
=\E\left[\left(\sum_{k=\tau_1}^{\tau_{2}-1}L_k\right)^2\right]\\
&=\sum_{i=1}^\infty\sum_{\ell=1}^\infty\E\left[\left(\sum_{k=i}^{i+\ell-1}L_k\right)^2;\,\tau_1=i,\,\tau_{2}=i+\ell\right]\\
&\leq\sum_{i=1}^\infty\sum_{\ell=1}^\infty\E\left[\left(\sum_{k=i}^{i+\ell-1}L_k\right)^\zeta\right]^{2/\zeta}\P[\tau_1=i,\,\tau_{2}=i+\ell]^{1/\eta}\\
&\leq\max_{k\geq 1}\E\left[L_k^\zeta\right]^{2/\zeta}\sum_{i=1}^\infty\sum_{\ell=1}^\infty\ell^2\P[\tau_1=i,\,\tau_{2}=i+\ell]^{1/\eta}\\
&\leq (K'_\zeta)^{2/\zeta}\left(\frac{\beta-1}{2\beta}\right)^{2/\eta}\left(\sum_{\ell=1}^\infty\left(\frac{\beta+1}{2\beta}\right)^{(\ell-1)/\eta}\ell^2\right)\left(\sum_{\ell=1}^\infty\left(\frac{\beta+1}{2\beta}\right)^{(\ell-1)/\eta}\right)<\infty.
\end{aligned}
\end{equation}
Having established the existence of second moments of the regeneration times, we can argue as in the proof of Theorem 2 in \cite{Bowditch2019} to conclude the proof of Theorem~\ref{T2}.
\hfill\qed
\Section{Einstein relation: proof of Theorem~\ref{T3}}
\label{S4}
Recall that $\Ray$ denotes the (unique) self-avoiding path on $\CT$ from $-\infty$ to $\infty$. We may and will assume that $X$ starts at a point chosen from the ray instead of (possibly) from a point inside a trap. Since all traps are of finite size, it is enough to prove the theorem for this initial position. For $n\in\N_0$, define the last time $m\leq n$ when $X_m$ was on the ray by
$$n^*:=\inf\{m\leq n:\,X_m\in\Ray\}$$
and let
$$\tilde X_n:=X_{n^*}.$$
That is, $\tilde X$ waits at the entrances of the traps for $X$ to come back to the ray. Since the depths of the traps are independent geometric random variables, it is easy to check that
$$\frac{1}{\sqrt{n}}|X_n^{(1)}-\tilde X_n^{(1)} |\limn0\mfs$$
Hence, it is enough to show the theorem for $\tilde X$ instead of $X$.

Note that $\tilde X$ is symmetric simple random walk on $\Ray$ with random holding times. We now give a different construction of such a random walk that allows explicit computations.

Recall that $\phi(n)=(\phi^{(1)}(n),\phi^{(2)}(n))$, $n\in\Z$, is the enumeration of $\Ray$, such that $\phi^{(1)}(0)=0$ and $\phi^{(1)}(-1)=-1$. Let $Y$ be symmetric simple random walk on $\Z$. Then $\phi(Y_n)$ is symmetric simple random walk on $\Ray$. For each $k\in\Z$, there may or may not be a trap starting at $\phi(k)$. Let $\nu_k$ denote the (random) distribution of the time that $X$ spends in that trap at any visit of $\phi(k)$ before it makes a move on $\Ray$. Let $U_{k,i}$ denote the time that $\tilde X$ spends at its $i$th visit to $\phi(k)$ before it makes a step on the ray. Clearly, the $U_{k,i}$ are independent given $(\nu_k)_{k\in\Z}$ and $U_{k,i}$ has distribution $\nu_k$. Now we define
$$S_n:=\sum_{k=0}^n U_{Y_k,k}$$ and let
$$Y^D_n=Y_k\mfalls S_{k-1}<n\leq S_{k}.$$
Then $\phi(Y^D_n)_{n\in\N_0}$ and $(\tilde X_n)_{n\in\N_0}$ coincide in (quenched) distribution.
Hence the proof of Theorem~\ref{T3} amounts to showing that
\begin{equation}
\label{E4.01}
\left(\frac{1}{\sqrt{\sigma^2\,n}}\phi^{(1)}(Y^D_n)_{\lfloor tn\rfloor}\right)_{t\geq0}
\end{equation}
converges in $\P^\CT$-distribution in the Skorohod space $D_{[0,\infty)}$ to a standard Brownian motion.

Let $m:=\EE[\sum_\ell\nu_k(\{\ell\})\ell]$ be the mean time before $Y^D$ takes its next step. In Theorem 2.10 of \cite{BenArousCerny2015}, a functional central limit theorem for our $Y^D$ was shown for the situation where the sequence $(\nu_n)_{n\in\Z}$ is i.i.d. In our situation, the $(\nu_n)_{n\in\Z}$ are not i.i.d.{} but are ergodic. In fact, they are a strongly mixing sequence as the random spanning tree $\CT$ is made of i.i.d.{} building blocks. Taking a closer look at the proof of Theorem 2.10 in \cite[Section 7.2]{BenArousCerny2015}, we notice that their assumption of independence is too strong and that ergodicity is perfectly enough to infer the statement of the theorem. Hence, by Theorem 2.10 of \cite{BenArousCerny2015}, we get that
\begin{equation}
\label{E4.02}
\left(\frac{1}{\sqrt{n/m}}Y^D_{\lfloor tn\rfloor}\right)_{t\geq0}\end{equation}
converges in $\P^\CT$-distribution in the Skorohod space $D_{[0,\infty)}$
to a standard Brownian motion.

It remains to compute $m$ and to measure the effect of applying $\phi^{(1)}$ to $Y^D$.

\begin{lemma}
\label{L4.01}
For almost all $\CT$, we have
$$\lim_{|n|\to\infty}\frac{\phi^{(1)}(n)}{n}=\frac{2(1+\alpha)}{3+\alpha}.$$
\end{lemma}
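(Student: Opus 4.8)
The plan is to read off the geometry of the ray $\Ray$ block by block and then apply a law of large numbers; nothing about the walk $X$ or $Y^D$ is needed, since $\phi^{(1)}(n)$ (the horizontal coordinate of the $n$-th vertex of $\Ray$, in the Section~\ref{S4} convention) is a deterministic functional of $\CT$. Recall that $\CT$ decomposes into the i.i.d.\ blocks between consecutive missing horizontal edges, the $n$-th block occupying columns $H_n,\ldots,H_{n+1}$, carrying exactly one rung at $V_n=H_n+F'_n$, and having its left (resp.\ right) boundary edge missing in row $W_n$ (resp.\ $W_{n+1}$). First I would pin down how $\Ray$ crosses one block. The unique edge joining block $n-1$ to block $n$ is the surviving boundary edge $h_{1-W_n,H_n-1}$ (the one in the row not broken at $H_n$); it is a bridge separating the two infinite halves of $\CT$, hence lies on $\Ray$, so the ray enters block $n$ at $(1-W_n,H_n)$ and must leave it at $(1-W_{n+1},H_{n+1})$. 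If $W_n=W_{n+1}$ the ray stays in a single row, the rung at $V_n$ is merely the stem of a type-(c) trap, and the ray uses exactly $G_n:=H_{n+1}-H_n$ edges. If $W_n\neq W_{n+1}$ a row switch is forced, and it is possible only along the rung; the ray then runs right to $V_n$, climbs the rung, and runs right in the other row, spawning a type-(a) and a type-(b) trap and using $G_n+1$ edges. In both cases the net horizontal displacement across the block is exactly $G_n$, because the only non-horizontal edge ever traversed is the rung.

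Writing $t_N$ for the ray-index at which $\Ray$ first reaches column $H_N$, we therefore have $\phi^{(1)}(t_N)=H_N$ together with
$$t_{N}-t_1=\sum_{k=1}^{N-1}\big(G_k+\mathbf{1}_{\{W_k\neq W_{k+1}\}}\big),\qquad H_N-H_1=\sum_{k=1}^{N-1}G_k.$$
The $G_k$ ($k\geq1$) are i.i.d.\ with $\EE[G_1]=\frac{1+\alpha}{1-\alpha}$ and the $W_k$ are i.i.d.\ fair, so $\big(G_k,\mathbf{1}_{\{W_k\neq W_{k+1}\}}\big)_k$ is a finite-range function of i.i.d.\ variables, hence stationary and ergodic; the strong law gives $t_N/N\to\EE[G_1]+\tfrac12$ and $H_N/N\to\EE[G_1]$ almost surely. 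Consequently
$$\frac{\phi^{(1)}(t_N)}{t_N}=\frac{H_N}{t_N}\longrightarrow\frac{\EE[G_1]}{\EE[G_1]+\tfrac12}=\frac{(1+\alpha)/(1-\alpha)}{(3+\alpha)/\big(2(1-\alpha)\big)}=\frac{2(1+\alpha)}{3+\alpha}.$$
The finitely many exceptional blocks near the origin (in particular the size-biased block $0$) do not affect these limits.

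To pass from the subsequence $(t_N)$ to all $n$, I would use that $n\mapsto\phi^{(1)}(n)$ is non-decreasing along $\Ray$ (the horizontal coordinate never decreases as the ray advances; it is merely constant on a rung step). For $t_N\leq n<t_{N+1}$ this gives $H_N\leq\phi^{(1)}(n)\leq H_{N+1}$, whence $H_N/t_{N+1}\leq\phi^{(1)}(n)/n\leq H_{N+1}/t_N$; since $t_{N+1}/t_N\to1$, both bounds converge to $\frac{2(1+\alpha)}{3+\alpha}$, which settles $n\to+\infty$. The case $n\to-\infty$ is identical: the blocks of index $k\leq-1$ are built from the same i.i.d.\ laws for $F_k,F'_k,W_k$, so the same computation applies to the left half of the ray, and $\phi^{(1)}(n)/n$ (a ratio of two quantities tending to $-\infty$) converges to the same positive constant.

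The routine parts are the two applications of the strong law and the interpolation. The step that genuinely needs care — and the one I would write out in full — is the block-crossing count: that $\Ray$ uses $G_n+\mathbf{1}_{\{W_n\neq W_{n+1}\}}$ edges while displacing $G_n$ horizontally. This rests on the facts that the surviving boundary edge is a forced bridge, that a row switch is required precisely when $W_n\neq W_{n+1}$, and that such a switch can occur only at the unique rung — exactly the content of the trap classification (a)--(c) established in Section~\ref{S2}. The single extra unit of graph-distance in the switching case is the vertical rung step, which carries no horizontal displacement; this mismatch between edge-count and horizontal-count, occurring at rate $\PP[W_1\neq W_2]=\tfrac12$ per block, is precisely what turns the denominator $1+\alpha$ of the naive ratio into $3+\alpha$.
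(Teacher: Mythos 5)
Your proof is correct and takes essentially the same route as the paper's: both arguments rest on the observation that the ray traverses the rung $z_{V_n}$ precisely when $W_n\neq W_{n+1}$, i.e.\ at asymptotic rate $\tfrac12\cdot\frac{1-\alpha}{1+\alpha}$ per column, and then use a law of large numbers to convert the resulting excess of ray-length over horizontal displacement into the limit $\frac{2(1+\alpha)}{3+\alpha}$. The paper does this by applying ergodicity directly to the stationary ray-rung indicators per column, while you organize the same count via the i.i.d.\ block decomposition, renewal indices $t_N$ and an interpolation step — a more explicit write-up of the inversion that the paper leaves implicit in its final line.
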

\textbf{Proof. }
Let $$p:=\PP[\CT\ni z_{k}]=\PP[V_n=k\mbs{for some}n\in\Z]=\frac{1-\alpha}{1+\alpha}, \qquad k\in\Z,$$
be the probability that a given vertical edge $z_k$ is in the spanning tree. Recall that the fair coin flips $(W_n)$ determine the vertical positions of the missing edges. That is, the horizontal edge $h_{W_n,H_n}$ is not in the tree $\CT$. Note that $H_n\leq V_n<H_{n+1}$ and that the vertical edge $z_{V_n}$ is in the ray if and only if $W_n\neq W_{n+1}$. Hence
$$\PP[\Ray\ni z_{k}]=\sum_{n\in\Z}\P[V_n=k,\,W_n\neq W_{n+1}]=\frac12\sum_{n\in\Z}\P[V_n=k]=\frac{p}{2}.$$

By ergodicity, we get for almost all $\CT$
$$\frac1n\#\big\{k\in\{0,\ldots,n\}:\,z_k\in\Ray\big\}\limn \frac p2$$
and
$$\frac1n\#\big\{k\in\{-n,\ldots,0\}:\,z_k\in\Ray\big\}\limn \frac p2.$$
This implies
\[\lim_{|n|\to\infty}\frac{\phi^{(1)}(n)}{n}=\frac{1}{1+p/2}=\frac{2(1+\alpha)}{3+\alpha}.
\tag*{$\Box$}\]
\medskip
\par

\begin{lemma}
\label{LemmaCLT2}
We have
$$m=\frac{4(\alpha+1)}{3+\alpha}.$$
\end{lemma}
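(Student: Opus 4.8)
The plan is to read $m=\EE\big[\sum_\ell\nu_k(\{\ell\})\ell\big]$ as the stationary, environment-averaged mean holding time of $Y^D$ seen along the ray, and to evaluate it by a renewal--reward computation over the i.i.d.\ building blocks $\CT_n$. Because the environment viewed along the ray is a deterministic function of the i.i.d.\ blocks, it is stationary and ergodic (this is exactly the ergodicity already invoked for the invariance principle), so by Birkhoff's theorem $m=\EE[\mathrm{mean}(\nu_0)]$ equals the Cesàro limit $\lim_{N}N^{-1}\sum_{k=1}^{N}\mathrm{mean}(\nu_k)$ taken along the ray index $k$. Partitioning the indices $k$ according to the block $\CT_n$ in which $\phi(k)$ lies and applying the renewal--reward theorem to the i.i.d.\ blocks gives
$$m=\frac{\EE\big[\text{total expected holding time accrued in one block}\big]}{\EE\big[\text{number of ray-steps in one block}\big]}.$$
Since the mean holding time at a ray point is $1+\E^\CT[T]$ (one unit for the step on the ray plus the mean trap time $T$ at that point, which is $0$ when no trap is present), the numerator equals $\EE[S_n]+\EE[R_n]$, where $S_n$ is the number of ray-edges traversed in block $n$ and $R_n:=\sum_{k\in\,\text{block }n}\E^\CT[T_k]$. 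Hence $m=1+\EE[R_n]/\EE[S_n]$.

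For the denominator, the ray advances $G_n=H_{n+1}-H_n$ columns across block $n$, contributing $G_n$ horizontal ray-edges, plus one extra (vertical) ray-edge precisely when the block's rung $z_{V_n}$ lies on the ray, i.e.\ when $W_n\neq W_{n+1}$, which has probability $\tfrac12$. With $\EE[G_n]=2\tfrac{\alpha}{1-\alpha}+1=\tfrac{1+\alpha}{1-\alpha}$ this yields $\EE[S_n]=\tfrac{1+\alpha}{1-\alpha}+\tfrac12=\tfrac{3+\alpha}{2(1-\alpha)}$, consistent with Lemma~\ref{L4.01}.

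For the numerator I would enumerate the traps produced by a single block, setting $\beta=1$ in Lemmas~\ref{L2.01}--\ref{L2.03} and distinguishing the two cases given by the fair coin $\1_{\{W_n\neq W_{n+1}\}}$. If $W_n=W_{n+1}$ (probability $\tfrac12$) the rung is off the ray; the ray passes straight through on the opposite level, while the rung together with its $F'_n$ horizontal edges on the left and $F_n$ on the right (recall $V_n=H_n+F'_n$ and $H_{n+1}=V_n+F_n+1$) forms a single type-(c) trap (Figure~\ref{F2.3}), of expected time $F_n+F'_n+1$ by Lemma~\ref{L2.03}. If $W_n\neq W_{n+1}$ (probability $\tfrac12$) the ray switches level through the rung, leaving a type-(a) dead end of $F_n$ edges on the abandoned level to the right and a type-(b) dead end of $F'_n$ edges on the new level to the left (Figures~\ref{F2.1} and~\ref{F2.2}), of total expected time $F_n+F'_n$ by Lemmas~\ref{L2.01} and~\ref{L2.02}. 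Therefore
$$\EE[R_n]=\tfrac12\,\EE[F_n+F'_n+1]+\tfrac12\,\EE[F_n+F'_n]=\EE[F_n]+\EE[F'_n]+\tfrac12=\frac{2\alpha}{1-\alpha}+\frac12=\frac{1+3\alpha}{2(1-\alpha)},$$
and dividing gives $m=1+\tfrac{1+3\alpha}{3+\alpha}=\tfrac{4(1+\alpha)}{3+\alpha}$, as claimed.

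The step I expect to require the most care is the trap enumeration: one must verify that each block contributes exactly the traps listed, with no dead end split across a block boundary or counted twice, and that the dead-end lengths really are $F_n$ and $F'_n$ as forced by the positions of the two bounding missing horizontal edges relative to $V_n$. The reduction to the renewal--reward ratio is routine given the ergodicity of the environment along the ray, and the surrounding arithmetic only uses $\EE[F_n]=\EE[F'_n]=\alpha/(1-\alpha)$ together with $\PP[W_n\neq W_{n+1}]=\tfrac12$.
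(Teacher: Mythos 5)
Your proof is correct, and the step you flag as delicate does hold: in the block between the missing edges at $H_n$ and $H_{n+1}$, the case $W_n=W_{n+1}$ produces exactly one type-(c) trap (the rung plus $F'_n$ edges to the left and $F_n$ to the right, mean time $F_n+F'_n+1$ by Lemma~\ref{L2.03} at $\beta=1$), the case $W_n\neq W_{n+1}$ produces exactly one type-(a) and one type-(b) dead end (total mean time $F_n+F'_n$ by Lemmas~\ref{L2.01} and~\ref{L2.02}), and no trap can straddle a block boundary because the missing horizontal edge disconnects the abandoned level there. Your computational route, however, differs from the paper's. The paper also starts from the ergodic theorem and from the fact that at $\beta=1$ the mean holding time at a ray vertex is $1$ plus the number of edges of the adjacent trap, but it finishes with a global counting identity rather than a block-by-block renewal--reward: summing the mean holding times over the ray points $0,\ldots,n$ counts each edge of $\CT$ in the horizontal window $\{0,\ldots,\phi^{(1)}(n)\}$ exactly once (ray edges through the $+1$'s, trap edges through the trap times), and a spanning tree of the ladder has asymptotically two edges per column, so this sum is $\sim 2\phi^{(1)}(n)$; Lemma~\ref{L4.01} then gives $m=2\cdot\frac{2(1+\alpha)}{3+\alpha}$ with no case analysis and no geometric moment computations. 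Your argument buys independence from Lemma~\ref{L4.01} --- indeed your ratio $\EE[G_n]/\EE[S_n]=\frac{2(1+\alpha)}{3+\alpha}$ re-derives it --- and it exhibits explicitly how $m$ splits into the ray cost $\EE[S_n]=\frac{3+\alpha}{2(1-\alpha)}$ and the trap cost $\EE[R_n]=\frac{1+3\alpha}{2(1-\alpha)}$; the paper's argument buys brevity and robustness, since the edge count $2G_n$ per block is deterministic and requires no taxonomy of traps.
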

\textbf{Proof. }
For $n$ such that the vertex $\phi(n)$ is of degree 2 (that is, there is no trap adjacent to $\phi(n)$), we have $\nu_n=\delta_1$ since $Y^D$ makes its next step immediately. For $n$ such that $\phi(n)$ has degree 3 and there is a trap consisting of $k$ edges starting at $\phi(n)$, by \equ{E2.05}, we have that the average holding time is
$$\sum_\ell\nu_n(\ell)\ell=k+1.$$
By the ergodic theorem, we get
\begin{align}
m&=\lim_{n\to\infty}\frac1n\sum_{k=0}^n\sum_{\ell}\nu_k(\ell)\ell\nonumber\\\nonumber
&=\lim_{n\to\infty}\frac1n\#\big\{\mbox{edges of }\CT\cap(\{0,1\}\times\{0,\ldots,\phi^{(1)}(n)\})\big\}\\
&=\lim_{n\to\infty}\frac1n2\phi^{(1)}(n)=\frac{4(1+\alpha)}{3+\alpha}.
\tag*{$\Box$}
\end{align}
\medskip\par

Together with the above discussion, these lemmas finish the proof of Theorem~\ref{T3}.
\hfill\qed

\bibliographystyle{plain}
%\bibliography{RWonSpannTree}

\begin{thebibliography}{10}

\bibitem{Aidekon2014}
Elie A\"{\i}d\'{e}kon.
\newblock Speed of the biased random walk on a {G}alton-{W}atson tree.
\newblock {\em Probab. Theory Related Fields}, 159(3-4):597--617, 2014.

\bibitem{BalakBroeck}
Venkataraman Balakrishnan, Christian Van den Broeck
\newblock {Transport properties on a random comb}
\newblock {\em Physica A}, 217: 1--21, 1995.

\bibitem{BarmaDhar}
Mustansir Barma and Deepak Dhar.
\newblock Directed diffusion in a percolation network.
\newblock {\em Journal of Physics C}, 16(8), 1983.

\bibitem{BenArousCerny2015}
G\'{e}rard Ben~Arous, Manuel Cabezas, Ji\v{r}\'{\i} \v{C}ern\'{y}, and Roman
  Royfman.
\newblock Randomly trapped random walks.
\newblock {\em Ann. Probab.}, 43(5):2405--2457, 2015.



\bibitem{GBAFri}
G\'{e}rard Ben~Arous and Alexander Fribergh.
\newblock Biased random walks on random graphs.
\newblock In {\em Probability and statistical physics in {S}t. {P}etersburg},
  volume~91 of {\em Proc. Sympos. Pure Math.}, pages 99--153. Amer. Math. Soc.,
  Providence, RI, 2016.

\bibitem{BFGH}
G\'{e}rard Ben~Arous, Alexander Fribergh, Nina Gantert, and Alan Hammond.
\newblock Biased random walks on {G}alton-{W}atson trees with leaves.
\newblock {\em Ann. Probab.}, 40(1):280--338, 2012.

\bibitem{BenArousFriberghSidorvacius2014}
G\'{e}rard Ben~Arous, Alexander Fribergh, and Vladas Sidoravicius.
\newblock Lyons-{P}emantle-{P}eres monotonicity problem for high biases.
\newblock {\em Comm. Pure Appl. Math.}, 67(4):519--530, 2014.

\bibitem{BergerGantertNagel2019}
Noam Berger, Nina Gantert, and Jan Nagel.
\newblock The speed of biased random walk among random conductances.
\newblock {\em Ann. Inst. Henri Poincar\'{e} Probab. Stat.}, 55(2):862--881,
  2019.

\bibitem{BGP}
Noam Berger, Nina Gantert, and Yuval Peres.
\newblock The speed of biased random walk on percolation clusters.
\newblock {\em Probab. Theory Related Fields}, 126(2):221--242, 2003.

\bibitem{BetzMeinersTomic2023}
Volker Betz, Matthias Meiners, and Ivana Tomic.
\newblock Speed function for biased random walks with traps.
\newblock {\em Statistics \& Probability Letters}, 195:109765, 2023.


\bibitem{Bowditch2019}
Adam Bowditch.
\newblock Central limit theorems for biased randomly trapped random walks on
  {$\Bbb Z$}.
\newblock {\em Stochastic Process. Appl.}, 129(3):740--770, 2019.

\bibitem{BowditchCroydon2022}
Adam~M. Bowditch and David~A. Croydon.
\newblock Biased random walk on supercritical percolation: anomalous
  fluctuations in the ballistic regime.
\newblock {\em Electron. J. Probab.}, 27:Paper No. 68, 22, 2022.

\bibitem{DemaerelMaes}
Thibaut Demaerel ad Christian Maes.
\newblock The asymptotic speed of reaction fronts in active reaction-diffusion systems.
\newblock {\em Journal of Physics A}, 52, 2019, 245001.

\bibitem{FriHam}
Alexander Fribergh and Alan Hammond.
\newblock Phase transition for the speed of the biased random walk on the
  supercritical percolation cluster.
\newblock {\em Comm. Pure Appl. Math.}, 67(2):173--245, 2014.

\bibitem{GantertKlenke2022}
Nina Gantert and Achim Klenke.
\newblock The {T}ail of the {L}ength of an {E}xcursion in a {T}rap of {R}andom
  {S}ize.
\newblock {\em J. Stat. Phys.}, 188(3):Paper No. 27, 2022.

\bibitem{GMM2}
Nina Gantert, Matthias Meiners, and Sebastian M\"{u}ller.
\newblock Einstein relation for random walk in a one-dimensional percolation
  model.
\newblock {\em J. Stat. Phys.}, 176(4):737--772, 2019.

\bibitem{Haggstrom1994}
Olle H{\"a}ggstr{\"o}m.
\newblock {\em Aspects of Spatial random processes}.
\newblock PhD thesis, University G{\"o}te\-borg, 1994.

\bibitem{Hammond}
Alan Hammond.
\newblock Stable limit laws for randomly biased walks on supercritical trees.
\newblock {\em Ann. Probab.}, 41(3A):1694--1766, 2013.

\bibitem{Klenke2017}
Achim Klenke.
\newblock {The random spanning tree on ladder-like graphs}.
\newblock {\em https://doi.org/10.48550/arXiv.1704.00182}, 2017.

\bibitem{Klenke2020e}
Achim Klenke.
\newblock {\em Probability theory: {A} comprehensive course.}
\newblock Universitext. Springer Nature Switzerland AG, Cham, 3. edition, 2020.

\bibitem{KotakBarma}
Jesal Kotak and Mustansir Barma.
\newblock {Biased induced drift and trapping on randpom combs and the Bethe lattice: Fluctuation regime and first order phase transitions}
\newblock {\em Physica A}, 597, 2022, 127311.



\bibitem{MeiLue}
Jan-Erik L\"{u}bbers and Matthias Meiners.
\newblock The speed of critically biased random walk in a one-dimensional
  percolation model.
\newblock {\em Electron. J. Probab.}, 24:Paper No. 23, 29, 2019.

\bibitem{LPP}
Russell Lyons, Robin Pemantle, and Yuval Peres.
\newblock Biased random walks on {G}alton-{W}atson trees.
\newblock {\em Probab. Theory Related Fields}, 106(2):249--264, 1996.

\bibitem{LyonsPeresPemantle1997}
Russell Lyons, Robin Pemantle, and Yuval Peres.
\newblock Unsolved problems concerning random walks on trees.
\newblock In {\em Classical and modern branching processes ({M}inneapolis,
  {MN}, 1994)}, volume~84 of {\em IMA Vol. Math. Appl.}, pages 223--237.
  Springer, New York, 1997.

\bibitem{Pottier}
No\"{e}lle Pottier
\newblock {Diffusion on random comblike structures: field-induced trapping effects}
\newblock {\em Physica A}, 216: 1--19, 1995.

\bibitem{Sznitperc}
Alain-Sol Sznitman.
\newblock On the anisotropic walk on the supercritical percolation cluster.
\newblock {\em Comm. Math. Phys.}, 240(1-2):123--148, 2003.

\bibitem{WhiteBarma}
Steven R.  White and Mustansir Barma.
\newblock Field-induced drift and trapping in percolation networks.
\newblock {\em Journal of Physics A}, 17, 2995 -- 3008, 1984.


\end{thebibliography}

Data Availability Statement:\\
Data sharing not applicable to this article as no datasets were generated or analysed during the current study.\\
Conflict of interest:\\
There is no conflict of interest.\\
Funding:\\
The authors did not receive support from any organization for the submitted work.

\end{document}